\colorlet{shadecolor}{blue!15}
\newtheorem{theorem}{Theorem}
\newtheorem{lemma}[theorem]{Lemma}
\newtheorem{proposition}{Proposition}
\newtheorem{remark}[theorem]{Remark}
\newcommand{\calA}{\mathcal{A}}
\newcommand{\calC}{\mathcal{C}}
\newcommand{\calE}{\mathcal{E}}
\newcommand{\calF}{\mathcal{F}}
\newcommand{\calH}{\mathcal{H}}
\newcommand{\calK}{\mathcal{K}}
\newcommand{\calX}{\mathcal{X}}
\newcommand{\bbN}{\mathbb{N}}
\newcommand{\bbZ}{\mathbb{Z}}
\newcommand{\sfN}{\mathsf{N}}
\newcommand{\llb}{\llbracket}
\newcommand{\rrb}{\rrbracket}
\newcommand{\lr}[1][]{\xleftrightarrow{\: #1 \:\:}}
\numberwithin{equation}{section}
\newcommand{\rk}[1]{\bgroup\color{red}%
  \par\medskip\hrule\smallskip%
  \noindent\textbf{#1}%
  \par\smallskip\hrule\medskip\egroup}
\title{A new computation of the critical point for the planar random-cluster model with $q\ge1$}
\author{Hugo Duminil-Copin, Aran Raoufi, Vincent Tassion}
\date{\today}
\begin{document}
\maketitle
 
 \begin{abstract}
 We present a new computation of the critical value of the random-cluster model with cluster weight $q\ge 1$ on $\bbZ^2$. This provides an alternative approach to the result in \cite{BefDum12}. We believe that this approach has several advantages. First, most of the proof can easily be extended to other planar graphs with sufficient symmetries. Furthermore, it invokes RSW-type arguments which are not based on self-duality. And finally, it contains a new way of applying sharp threshold results which avoid the use of symmetric events and periodic boundary conditions.
 \end{abstract}
 \section{Introduction} 

 The random-cluster model is one of the most classical generalization of Bernoulli percolation and electrical networks. This model was introduced by Fortuin and Kasteleyn in \cite{ForKas72} and has since then been the object of intense study, both physically and mathematically.

A {\em percolation configuration} on a graph $G=(V_G,E_G)$ ($V_G$ is the vertex set and $E_G$ the edge set) is an element $\omega=(\omega_e:e\in E_G)$ in $\{0,1\}^{E_G}$. An edge $e$ is said to be {\em  open} (in $\omega$) if $\omega_e=1$, otherwise it is {\em closed}. A configuration $\omega$ can be seen as a subgraph of $G$ with vertex set $V_\omega:=V_G$ and edge set $E_\omega:=\{e\in E_G:\omega_e=1\}$.

Let $p\in[0,1]$ and $q\ge1$. We will work with the {\em random-cluster measure
  $\phi_{p,q}^1$ on the square lattice with weights $(p,q)$ and wired boundary
  conditions}. Let us briefly recall its definition here.

The {\em square lattice} $\bbZ^2$ is defined to be the graph with
\begin{align*}V_{\bbZ^2}&:=\big\{(x,y):x,y\in\bbZ\big\},\\
E_{\bbZ^2}&:=\big\{\{(x,y),(x',y')\}\subset \bbZ^2\text{ such that } |x-x'|+|y-y'|=1\big\}.\end{align*}
Let $G$ be a finite subgraph of $\bbZ^2$ and let  $\phi_{G,p,q}^1$ be the measure on percolation configurations $\omega$ on $G$ defined by 
$$\phi_{G,p,q}^1(\omega)=\frac{p^{|E_\omega|}\ (1-p)^{|E_G\setminus E_\omega|}\ q^{k_1(\omega)}}{Z^1(G,p,q)},$$
where $k_1(\omega)$ is the number of connected components of the percolation configuration $\overline \omega$ on $\bbZ^2$ defined by $\overline \omega_e=\omega_e$ if $e\in E_G$, and 1 otherwise, and $Z^1(G,p,q)$ is a normalizing constant to make the total mass of the measure equal to 1.
Then, $\phi_{p,q}^1$ is the probability measure on percolation configurations on $\bbZ^2$ defined as the weak limit of the 
$\phi_{G,p,q}^1$ as $G$ exhausts $\bbZ^2$. We refer to \cite{Gri06} for a justification that this limit exists.

%Let $A\lr[S] B$ be the event that there exist distinct vertices $v_0,v_1,\dots,v_k\in S$ such that $v_0\in A$, $v_k\in B$, and $\omega_{v_i,v_{i+1}}=1$ for any $0\le i<k$. If $S=\bbZ^2$, we drop it from the notation and simply write $A\lr[] B$. We also allow $B=\infty$, in which case we mean that $A\lr[S] x$ for an infinity of vertices $x\in\bbZ^2$.

The random-cluster model on $\bbZ^2$ is known to undergo a phase transition for
$q\ge1$. Let $\{0\leftrightarrow\infty\}$ be the event that $0$ is in an
infinite connected component of $\omega$. There exists $p_c=p_c(q)$ such that
$\phi_{p,q}^1[0\leftrightarrow\infty]$ is equal to $0$ for $p<p_c$, and is
strictly positive if $p>p_c$. We give a new proof of the following result, which
was originally obtained in \cite{BefDum12}. 
\begin{theorem}\label{main:corollary}
Let $q\ge1$, the critical value $p_c=p_c(q)$ is equal to $\sqrt q/(1+\sqrt q)$. Furthermore, for  $p<p_c$, there exists $c=c(p,q)>0$ such that for any $x\in\bbZ^2$,
\begin{equation}\label{eq:abcd}\phi_{p,q}^1[0\lr x]\le \exp[-c \|x\|],\end{equation}
where $\|(a,b)\|=\max\{|a|,|b|\}$, and $\{0\lr x\}$ denotes the event that there
exists a path from $0$ to $x$ in $\omega$. 
\end{theorem}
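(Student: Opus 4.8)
\emph{Strategy.} Write $p_{\mathrm{sd}}:=\sqrt q/(1+\sqrt q)$ for the self-dual point, which satisfies $p_{\mathrm{sd}}^{\ast}=p_{\mathrm{sd}}$ under the duality relation $pp^{\ast}=q(1-p)(1-p^{\ast})$; note $p<p_{\mathrm{sd}}\Leftrightarrow p^{\ast}>p_{\mathrm{sd}}$. Since $q\ge1$, one has $\phi^{0}_{p,q}\preceq\phi^{1}_{p,q}$, positive association (FKG), and the usual comparison between boundary conditions. I would prove $p_{c}=p_{\mathrm{sd}}$ --- and the exponential decay in \eqref{eq:abcd} --- by combining three largely independent ingredients: (i) a Russo--Seymour--Welsh statement propagating one crossing estimate to all aspect ratios, \emph{without} using self-duality; (ii) a sharp-threshold argument converting ``crossing probabilities bounded away from $0$'' into a sharp dichotomy in $p$; and (iii) a short self-duality computation at $p_{\mathrm{sd}}$ feeding (i)--(ii) with a seed and locating the threshold.

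\emph{Step 1: RSW without self-duality.} The target is a conditional statement: for a fixed boundary condition, if for some $p$ the probability of crossing an $[0,\rho n]\times[0,n]$ rectangle in the long direction is bounded below uniformly in $n$ for one aspect ratio $\rho$, then the same holds for every aspect ratio (with a constant depending only on $\rho$ and the new ratio). The mechanism is classical: condition on the lowest horizontal crossing $\Gamma$ of a rectangle, reflect $\Gamma$ through a vertical or diagonal symmetry axis of $\bbZ^{2}$, and glue $\Gamma$, its reflection, and an auxiliary crossing by FKG into a crossing of a longer rectangle, then iterate. The crucial point is that this is run so that self-duality is never invoked: the only probabilistic inputs are invariance under the reflections and the $\pi/2$-rotation of $\bbZ^{2}$, the FKG inequality, and the fact that, conditionally on $\Gamma$, the configuration above $\Gamma$ stochastically dominates the free-boundary measure on the region above it (this last uses $q\ge1$).

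\emph{Step 2: sharp threshold.} Fix $\xi\in\{0,1\}$, an aspect ratio $\rho$, and put $f_{n}(p):=\phi^{\xi}_{p,q}[\mathcal{C}_{H}(R_{n})]$ with $R_{n}$ of shape $\rho n\times n$. The random-cluster form of Russo's formula writes $f_{n}'(p)$ as $\tfrac{1}{p(1-p)}$ times a sum over edges of pivotality probabilities, and Talagrand's inequality bounds this sum below by $c\,f_{n}(1-f_{n})\log(1/\max_{e}\mathrm{Inf}_{e})$; whenever $f_{n}$ is bounded away from $0$ and $1$, Step~1 forces $\max_{e}\mathrm{Inf}_{e}\le n^{-c}$ (pivotality for $\mathcal{C}_{H}(R_{n})$ entails a four-arm-type event around $e$, estimated by RSW). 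Integrating the resulting differential inequality for $\log\tfrac{f_{n}}{1-f_{n}}$ confines the transition of $f_{n}$ from near $0$ to near $1$ to a $p$-window of width $O(1/\log n)$; hence there is $p_{c}(\xi)$ with $f_{n}(p)\to0$ for $p<p_{c}(\xi)$ and $f_{n}(p)\to1$ for $p>p_{c}(\xi)$, and $p_{c}(\xi)$ is the $\xi$-percolation threshold (so $p_{c}(1)=p_{c}$). As announced in the abstract, this is applied directly to the non-symmetric event $\mathcal{C}_{H}(R_{n})$ in a genuine box, with no symmetric events or periodic boundary conditions. The comparison of boundary conditions gives $p_{c}(0)\ge p_{c}(1)$, and the classical fact (see \cite{Gri06}) that $\phi^{0}_{p,q}=\phi^{1}_{p,q}$ for all but countably many $p$ gives $p_{c}(0)=p_{c}(1)=:p_{c}$.

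\emph{Step 3: duality, conclusion, main obstacle.} Planar duality sends $\phi^{1}_{p,q}$ on a box with wired boundary to $\phi^{0}_{p^{\ast},q}$ on the dual box, turning ``no horizontal primal crossing of $R_{n}$'' into ``a vertical dual crossing of $R_{n}^{\ast}$''. Taking $p=p_{\mathrm{sd}}$ (so $p^{\ast}=p_{\mathrm{sd}}$), $R_{n}$ a near-square, and rotating by $\pi/2$ gives $\phi^{1}_{p_{\mathrm{sd}},q}[\mathcal{C}_{H}(R_{n})]+\phi^{0}_{p_{\mathrm{sd}},q}[\mathcal{C}_{H}(\widetilde R_{n})]=1$ for congruent near-squares; with $\phi^{0}\preceq\phi^{1}$ (and Step~1 to absorb any mismatch of dimensions) this yields, uniformly in $n$, $\phi^{1}_{p_{\mathrm{sd}},q}[\mathcal{C}_{H}(\cdot)]\ge\tfrac12$ and $\phi^{0}_{p_{\mathrm{sd}},q}[\mathcal{C}_{H}(\cdot)]\le\tfrac12$. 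By Step~2 the first bound forbids $p_{\mathrm{sd}}<p_{c}$ and the second forbids $p_{\mathrm{sd}}>p_{c}$, so $p_{c}=p_{\mathrm{sd}}=\sqrt q/(1+\sqrt q)$. For $p<p_{c}$, Step~2 gives $\phi^{1}_{p,q}[\mathcal{C}_{H}(n\times n)]\to0$, which a finite-size criterion and a standard block argument (again only FKG and the comparison of boundary conditions) boost to $e^{-cn}$; since $\{0\lr x\}$ with $\|x\|=n$ forces such a crossing, a union bound over the $O(n)$ possible starting points gives \eqref{eq:abcd}. The real obstacle is Step~1: the usual planar RSW proofs seed \emph{and} glue via the self-dual identity $\phi_{p_{\mathrm{sd}}}[\mathcal{C}_{H}(R)]+\phi_{p_{\mathrm{sd}}}[\mathcal{C}_{V}^{\ast}(R)]=1$, so removing it forces one to produce crossings purely from the lowest-crossing exploration, reflections, and FKG while keeping the conditional boundary conditions under control. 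The second delicate point is the uniformity in the position of $e$ of the influence bound in Step~2 --- exactly what the full-strength RSW of Step~1 supplies.
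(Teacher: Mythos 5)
Your overall architecture --- an RSW statement free of self-duality, a sharp-threshold step, and a duality computation at $\sqrt q/(1+\sqrt q)$ --- matches the paper's, and your Step 3 (the seed $\phi^1_{p_{\mathrm{sd}}}[\calC_h(n,n)]\ge\frac12$ at the self-dual point, plus the countability of the set where $\phi^0_p\ne\phi^1_p$) is essentially the paper's Section~\ref{sec:proof-coroll-refm}. But the two technical engines you propose are not the ones the paper uses, and as described each has a genuine gap. In Step 1 you condition on the lowest crossing $\Gamma$, reflect it, and glue. The missing piece is exactly the one you yourself flag as ``the real obstacle'': after conditioning on $\Gamma$ one must connect $\Gamma$ to its reflection with uniformly positive probability, and in \cite{BefDum12} this is precisely where self-duality enters (failure of the connection forces a dual crossing of a symmetric domain). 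You offer no substitute, so Step 1 is not a proof. The paper resolves this with a different mechanism (Proposition~\ref{prop:weak RSW}, after \cite{Tas14b}): introduce $\calH_n(\alpha,\beta)$, define the scale $\alpha_n$ at which $\phi_p[\calH_n(\alpha,n)]$ drops below $c_0/2$, build the four-ended event $\calX_n(\alpha_n)$ by FKG and lattice symmetries alone, and compare $\alpha_n$ with $\alpha_{n+\lfloor n/2\rfloor}$; no lowest-crossing exploration and no control of conditional boundary conditions is needed. The price is that one only obtains $\limsup_n\phi_p[\calC_h(3n,n)]>0$, i.e.\ \eqref{eq:b}, which turns out to suffice.

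The second gap is in Step 2: you bound $\max_e\mathrm{Inf}_e$ by a four-arm-type event ``estimated by RSW''. That input is not available here --- Proposition~\ref{prop:weak RSW} gives only a limsup along a sparse sequence of scales, and in the dependent random-cluster setting a quantitative arm estimate is not an off-the-shelf consequence of crossing bounds; moreover the paper's Proposition~\ref{prop:high probability horizontal crossing} must hold for \emph{every} $p$ with no RSW hypothesis for the overall logic to close. The paper's Lemma~\ref{lem:averaging} avoids arm events entirely: from $A_k\Delta\tau_{(1,0)}A_k\subset A_{k-1}\setminus A_{k+1}$ one gets that for most $k\in\llb 2n,3n\rrb$ the influences $J_{A_k,p}(e)$ are nearly invariant under horizontal translation, so either all are at most $c\,n^{-1/3}$ or their sum is polynomially large; either way $f_p(A_k)\ge\frac13\log n$, and one averages over $k$. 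Finally, your passage from ``crossing probabilities tend to $1$'' to $1-e^{-cn}$ via ``a standard block argument'', and likewise your subcritical block argument for \eqref{eq:abcd}, are not standard for $q>1$: blocks are not independent, and the paper devotes Proposition~\ref{prop:exponential decay2} (disjoint crossings counted via Menger's theorem together with the Hamming-distance inequality of Lemma~\ref{lem:hamming1}) to precisely this point, then obtains the subcritical decay by exhibiting dual circuits around $\llb-n,n\rrb^2$ rather than by renormalizing the primal model. In short, you have identified the correct skeleton, but the three load-bearing steps are left unproved or rest on inputs that are unavailable in this setting.
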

% In \cite{BefDum12}, the value of $p_c(q)$ was
% proved to be equal to $\sqrt q/(1+\sqrt q)$, and the sharpness of the phase
% transition is established (for $p<p_c$, the probability that the connected
% component of $0$ in $\omega$ has diameter larger than $n$ decays exponentially
% fast in $n$). In this article, we provide an alternative proof of these two
% results.
As in \cite{BefDum12}, our strategy is based on the study of the crossing
probabilities, but we use more generic arguments. We will first prove
Theorem~\ref{thm:main} below, then we will deduce Theorem~\ref{main:corollary}
by using some self-duality properties specific to the random-cluster model on
the square lattice.

Let $\calC_h(a,b)$ be the event that there exits a sequence of vertices $v_0,\dots,v_k$ in $\llb -a,a\rrb\times\llb -b,b\rrb$ such that $v_0\in \{-a\}\times\llb -b,b\rrb$, $v_k\in \{a\}\times\llb -b,b\rrb$, and for any $0\le i<k$ the vertex $v_i$ is a neighbor of the vertex $v_{i+1}$ and $\omega_{v_i,v_{i+1}}=1$.
This event corresponds to the existence of a ``crossing from left to right'' in
the box $\llb -a,a\rrb\times\llb -b,b\rrb$.
\begin{theorem}\label{thm:main}
Let $q\ge1$ and $p\in[0,1]$. If 
\begin{equation}\label{eq:a}\inf_{n\ge 1}\phi_{p,q}^1[\calC_h(n,n)]>0,\tag{{$\mathcal A$}}\end{equation}
then for any $\delta\in(0,1-p]$, there exists $c>0$ such that for every $n\ge0$,
\begin{equation*}\label{eq:d}\phi^1_{p+\delta,q}[\calC_h(2n, n)] \ge 1-e^{-cn}.\end{equation*}
\end{theorem}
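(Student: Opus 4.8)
The plan is to bootstrap the weak hypothesis $(\mathcal A)$ — a uniform lower bound on crossing probabilities of \emph{square} boxes at parameter $p$ — up to a near-certainty statement for crossing rectangles of aspect ratio $2$ at the strictly larger parameter $p+\delta$. The engine has two parts: first, an RSW-type step that promotes the bounded-away-from-zero square crossings into bounded-away-from-zero crossings of rectangles of all fixed aspect ratios (still at parameter $p$), using only the symmetries of $\mathbb Z^2$ and positive association (FKG), not self-duality; second, a sharp-threshold / influence argument that uses the extra $\delta$ of edge-density to push a probability that is merely bounded below up to $1-e^{-cn}$.

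\textbf{Step 1 (RSW without self-duality).} Starting from $\inf_n\phi^1_{p,q}[\mathcal C_h(n,n)]>0$, I would first use FKG together with the up-down and left-right reflection symmetries of $\mathbb Z^2$ and a square-root trick to get a uniform lower bound on crossing probabilities of squares in \emph{both} directions, and then combine crossings of overlapping translated squares (gluing horizontal crossings through vertically stacked or horizontally shifted boxes, each intersection event being increasing so that FKG applies) to obtain, for every fixed $\rho\ge 1$, a constant $c_\rho>0$ with $\phi^1_{p,q}[\mathcal C_h(\rho n,n)]\ge c_\rho$ for all $n$. The only subtlety is the boundary condition: the wired state $\phi^1_{p,q}$ dominates the free state on any subgraph, so crossing estimates proved with free boundary conditions on sub-boxes transfer upward, and one must be slightly careful to always compare events on nested domains using this monotonicity in boundary conditions (domain Markov + comparison between boundary conditions for $q\ge 1$).

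\textbf{Step 2 (turning a bounded-below probability into $1-e^{-cn}$).} Fix the aspect ratio $2$ and set $f(p)=\phi^1_{p,q}[\mathcal C_h(2n,n)]$. From Step~1, $f(p)\ge c_2>0$. The goal is $f(p+\delta)\ge 1-e^{-cn}$. The standard route is a differential inequality of the type $\frac{d}{dp}\log\frac{f}{1-f}\ge c\,\Delta_n$, where $\Delta_n\to\infty$, obtained from a sharp-threshold estimate (Russo's formula plus a lower bound on the sum of influences, e.g.\ via the OSSS inequality or a BKKKL-type bound). Because $\mathcal C_h(2n,n)$ is \emph{not} invariant under a transitive group action (no periodic boundary conditions here), the classical symmetric-event sharp-threshold machinery does not apply directly; the new input the paper advertises is a way around this. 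The mechanism I would use: cover the $2n\times n$ box by $O(1)$ overlapping $n\times n$ squares; each time we raise $p$ by an infinitesimal amount, with probability bounded below (using Step~1 again) a new crossing of one of these squares appears, and crossings of the squares can be chained into a crossing of the $2n\times n$ rectangle. More precisely, one shows that if $\mathcal C_h(2n,n)$ fails then there are $\sim n$ disjoint "pivotal-like" witnesses (e.g.\ disjoint vertical segments each of which, if crossed horizontally through a local box, would help build the crossing), forcing the total influence to be at least of order $n$; feeding this into Russo's formula and integrating from $p$ to $p+\delta$ yields $1-f(p+\delta)\le (1-f(p))\cdot e^{-c'\delta n}$, hence the claim with $c=c'\delta$.

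\textbf{Main obstacle.} The delicate point is Step~2: getting a lower bound of order $n$ (not merely $\log n$, which would only give polynomial decay) on the sum of influences for the non-symmetric event $\mathcal C_h(2n,n)$ at every $p$ in the interval $[p,p+\delta]$. This requires a "bounded influence" input to convert a sharp-threshold estimate into an exponential rate, and the usual such inputs lean on symmetry. The resolution is to exploit the gap $\delta>0$ and the quantitative RSW of Step~1 at the running parameter: because crossing probabilities of $n\times n$ squares are \emph{uniformly} bounded below on the whole interval (as $p$ only increases and monotonicity gives $\phi^1_{p',q}[\mathcal C_h(n,n)]\ge \phi^1_{p,q}[\mathcal C_h(n,n)]$ for $p'\ge p$), one can, at each $p'\in[p,p+\delta]$, produce $\Theta(n)$ disjoint regions each carrying a crossing event of probability bounded below, and a Margulis–Russo / OSSS argument applied to these localized increasing events gives the needed linear-in-$n$ influence bound. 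Assembling this differential inequality and integrating is then routine.
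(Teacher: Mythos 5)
Your two-step skeleton (RSW at $p$, then sharp threshold to exploit the extra $\delta$) matches the paper's outline, but both steps as you describe them have real gaps, and the second one is structural.

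\textbf{Step 1.} Gluing overlapping translated squares by FKG only lets you pass from crossings of rectangles that are already \emph{longer than wide} in the hard direction, say $\llb -(n+m),n+m\rrb\times\llb-n,n\rrb$, to crossings of $\rho n\times n$ rectangles (this is \eqref{eq:combination} in the paper). Producing that first slightly-elongated crossing from square crossings, using only FKG and lattice symmetries, is the entire content of the RSW step; "a square-root trick plus gluing" does not supply it. The paper does this via Tassion's argument (the events $\calH_n(\alpha,\beta)$ and $\calX_n(\alpha)$, the threshold $\alpha_n$, and a three-case analysis), and even then obtains only $\limsup_n\phi_p[\calC_h(3n,n)]>0$, not the infimum over all $n$ that you claim; the paper's later steps are arranged precisely so that the limsup suffices.

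\textbf{Step 2.} The assertion that the sum of influences of $\calC_h(2n,n)$ is of order $n$ throughout $[p,p+\delta]$ cannot be correct: by Russo's formula the integral over the interval of $\sum_e J_{A,p'}(e)$ equals, up to the bounded factor $p'(1-p')$, the total increase of $\phi_{p'}[\calC_h(2n,n)]$, which is at most $1$. So the influence sum is $O(1/\delta)$ on average, never $\Theta(n)$. Sharp-threshold inequalities of BKKKL/Graham--Grimmett type contribute at best a factor $\log(1/\max_e J)=O(\log n)$, hence only \emph{polynomial} closeness of the crossing probability to $1$ (this is the paper's Proposition~3, where an extra averaging over aspect ratios $k\in\llb 2n,3n\rrb$ is needed because the event is not translation-invariant). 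The exponential rate requires a genuinely different second mechanism that your proposal does not contain: the differential inequality $\frac{d}{dp}\log(1-\phi_{G,p}[A])\le -4\,\phi_{G,p}[H_{A^c}]$ in terms of the expected Hamming distance to the complement, which for $\calC_h(2n,n)$ equals the maximal number of \emph{disjoint} horizontal crossings (Menger). One must then prove that with probability at least $1/2$ there are order $n$ disjoint crossings; the paper does this by a multi-scale induction (doubling the scale $K_i=2^in_0$, gluing $u$ disjoint crossings of three overlapping rectangles into $u$ disjoint crossings of a longer one, and spending an increment $\delta/i^2$ of the parameter at scale $i$ to restore the count from $c_i2^{i+1}$ to $c_{i+1}2^{i+1}$). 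Without this bootstrap, your argument stalls at $1-n^{-c\delta}$ and does not reach $1-e^{-cn}$.
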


\begin{remark}
If $\phi_{p,q}^1[0\leftrightarrow\infty] > 0$, then \eqref{eq:a} holds: because of the symmetry of the lattice 
$$ \phi_{p,q}^1\left[0 \lr \{-n\} \times \llbracket -n,n \rrbracket\right] > \phi_{p,q}^1\left[ 0 \lr \infty \right] /4, $$ 
and the same holds for $ \phi_{p,q}^1\left[0 \lr \{n\} \times \llbracket -n,n \rrbracket\right]$. FKG inequality (see the next section) then implies \eqref{eq:a}.
%\begin{align*}
%\phi_{p,q}^1[\calC_h(n,n)] &\geq \phi_{p,q}^1\left[0 \lr \{-n\} \times \llbracket -n,n \rrbracket ,  0 \lr \{n\} \times \llbracket -n,n \rrbracket \right] \\
%&\geq \phi_{p,q}^1\left[0 \lr \{-n\} \times \llbracket -n,n \rrbracket\right] \cdot \phi_{p,q}^1\left[ 0 \lr \{n\} \times \llbracket -n,n \rrbracket \right] \\
%& \geq \left( \frac {\phi_{p,q}^1\left[ 0 \lr \partial \llbracket -n,n \rrbracket^2 \right]}{4} \right) ^ 2 \\
%& \geq \left( \frac {\phi_{p,q}^1\left[ 0 \lr \infty \right]}{4} \right) ^2.
%\end{align*}
%In the second line, we used FKG inequality (see the next section). The third line is due to the symmetry of the lattice.
\end{remark}

We isolated Theorem~\ref{thm:main} because its proof does not involve duality
arguments. Therefore, it is valid for any planar lattice with sufficient
symmetries. By a duality argument presented in
Section~\ref{sec:proof-coroll-refm}, it is sufficient to compute the critical
value and prove that the phase transition is sharp on the square lattice. This
duality argument is not valid if we only assume the symmetries necessary for
Theorem~\ref{thm:main}, and we refer to \cite{duminil2014phase} for a proof of sharpness
of the phase transition for models with such symmetries.

\bigbreak The proof of Theorem~\ref{thm:main} is divided into
three steps, each one corresponding to a proposition below.
\begin{proposition}[RSW-type result]\label{prop:weak RSW}
  Let $q\ge1$. If \eqref{eq:a} holds for $\phi_{p,q}^1$, then 
  \begin{equation}\label{eq:b}\limsup_{n\rightarrow \infty} \> \phi_{p,q}^1\left[
      \calC_h(3n,n) \right] > 0\tag{$\mathcal B$}.
  \end{equation}
\end{proposition}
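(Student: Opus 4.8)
The plan is to upgrade a uniform lower bound on square crossings (hypothesis $\mathcal A$) to a uniform lower bound on crossings of $3n\times n$ rectangles. The natural route is to glue several square crossings together. Concretely, a left-right crossing of $\llb-3n/2,3n/2\rrb\times\llb-n/2,n/2\rrb$ can be produced from left-right crossings of a few overlapping $n\times n$ squares placed along the long direction, provided these crossings can be ``connected'' to each other in the overlap regions. Horizontal square crossings alone do not automatically chain: to link a crossing of one square to the crossing of the next, one wants a \emph{vertical} crossing in the overlap square, and such a vertical crossing, combined with two horizontal ones, forces all three to be in the same cluster by planarity. So the first step is to show that $\mathcal A$ also gives a uniform lower bound on vertical crossings of squares — but this is immediate, since $\calC_v(n,n)$ (defined by exchanging the roles of the two coordinates) has the same probability as $\calC_h(n,n)$ by the reflection symmetry of $\bbZ^2$ and of $\phi^1_{p,q}$ across the diagonal.

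The core of the argument is then a ``pushing'' or doubling lemma: from a uniform lower bound $\inf_n \phi^1_{p,q}[\calC_h(n,n)]=:\alpha>0$ one extracts a lower bound, along a subsequence $n_k\to\infty$, of the form $\liminf_k \phi^1_{p,q}[\calC_h(\rho n_k, n_k)] > 0$ for some aspect ratio $\rho>1$, then bootstraps $\rho$ up past $3$. The standard mechanism is the following. Consider two horizontal $n\times n$ squares overlapping in a square of side $\sim n/2$, together with a vertical crossing of that overlap square; using the FKG inequality one lower-bounds the probability that all three crossings occur simultaneously by the product of the three probabilities, each bounded below by $\alpha$. Planarity then forces the two horizontal crossings into a single cluster, yielding a crossing of a rectangle of aspect ratio $3/2$ with probability $\geq \alpha^3>0$. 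Iterating (gluing such longer rectangles with further vertical square crossings), one increases the aspect ratio at the cost of more FKG factors but keeps the bound strictly positive after finitely many steps, reaching aspect ratio $3$ and hence $(\mathcal B)$. One must be slightly careful that the FKG inequality applies: $\phi^1_{p,q}$ is a positively associated measure since $q\geq1$ (this is recalled in the next section of the paper), and all events involved are increasing.

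There is one genuine subtlety, and I expect it to be the main obstacle: hypothesis $\mathcal A$ is a bound for the \emph{wired} measure $\phi^1_{p,q}$ on all of $\bbZ^2$, and when one places square crossings in various sub-boxes one would like to use the same measure restricted appropriately. Here monotonicity in boundary conditions saves the day: for an increasing event $E$ supported in a finite box $\Lambda$, $\phi^1_{p,q}[E]\geq \phi^0_{\Lambda,p,q}[E]$ and more usefully $\phi^1_{p,q}$ dominates the measure on any sub-box with any boundary condition that is dominated by wired, so the square-crossing estimate transfers to every translate of an $n\times n$ box. Combined with FKG on $\phi^1_{p,q}$ itself (applied globally, not box by box), the gluing construction goes through without having to track boundary conditions delicately. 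The remaining work — choosing the precise positions and overlaps of the squares, and checking that finitely many doublings suffice to pass from aspect ratio $1$ to aspect ratio $3$ — is routine planar combinatorics and I would not belabor it.
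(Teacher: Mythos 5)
There is a genuine gap, and it sits exactly at the step you describe as the ``core of the argument.'' Two horizontal $n\times n$ squares whose centres differ by $n/2$ overlap in an $\tfrac n2\times n$ \emph{rectangle}, not a square: the overlap is $n/2$ wide and $n$ tall. To force the two horizontal crossings into the same cluster by planarity you need a top-to-bottom crossing of that full overlap region, i.e.\ a crossing of a $1:2$ rectangle in the \emph{hard} direction --- and hypothesis $(\mathcal A)$ gives no lower bound on that probability (obtaining one is essentially equivalent to what you are trying to prove). If instead you only require a vertical crossing of an $\tfrac n2\times\tfrac n2$ square inside the overlap, FKG applies but planarity no longer helps: the two horizontal crossings may pass entirely above or below that small square and never meet the vertical one. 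This is the classical RSW obstruction, and it is the whole reason Proposition~\ref{prop:weak RSW} is not a two-line FKG computation. By contrast, the ``subtlety'' you single out --- transferring the square-crossing bound between boxes and boundary conditions --- is a non-issue: all events are increasing and are estimated under the single infinite-volume measure $\phi^1_{p,q}$, which is invariant under the relevant symmetries, so FKG applies directly.

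The paper's proof is built precisely to circumvent this obstruction. It introduces the events $\calH_n(\alpha,\beta)$ (crossings of $\llb-n,n\rrb^2$ landing on a prescribed segment $\{n\}\times\llb\alpha,\beta\rrb$ of the right side), defines the scale $\alpha_n$ at which $\phi_p[\calH_n(\alpha,n)]$ drops below $c_0/2$, and constructs the events $\calX_n(\alpha_n)$ (a single cluster touching four prescribed boundary segments) whose probability is bounded below uniformly in $n$. The gluing is then carried out through a case analysis on how $\alpha_n$ grows: if $\alpha_n\ge n/2$ along a subsequence, translates of the rotated $\calX_n(\alpha_n)$ chain directly into a $3n\times n$ crossing; otherwise one exploits $\alpha_{n+\lfloor n/2\rfloor}<2\alpha_n$ to connect $\calX_n(\alpha_n)$ with two reflected copies of $\calH_N(0,2\alpha_n)$, whose probabilities are controlled by the definition of $\alpha_N$. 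Your proposal would need to be replaced by an argument of this kind --- some mechanism that genuinely connects crossings landing at uncontrolled heights; as written, the chaining step fails.
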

The proof of this proposition is based on a Russo-Seymour-Welsh (RSW) type
argument used in \cite{Tas14b} in the context Voronoi percolation.
Interestingly, this part of the argument uses the FKG inequality only. The cost
is that we obtain that the limsup only is strictly positive, instead of the
infimum. Nevertheless, as we will see this will be sufficient for our purpose.
\begin{proposition}[Sharp threshold for crossing probabilities]\label{prop:high probability horizontal crossing}
Let $q\ge1$ and $p\in[0,1]$. For any $\delta\in(0,1-p]$, there exists
$c=c(p,q)>0$ such that  for every $n\ge 1$
\begin{equation}
\phi_{p+\delta,q}^1 [\calC_h(2n,n)] \geq 1 - \frac{1}{\phi_{p,q}^1[\calC_h(3n,n)]} n^{-c\delta}.\label{eq:3}
\end{equation}

\end{proposition}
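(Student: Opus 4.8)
The plan is to apply a sharp-threshold argument (Russo-type, in the form used for monotone measures satisfying FKG) to the increasing event $\calC_h(2n,n)$, comparing its probability at parameters $p$ and $p+\delta$. The standard tool is the differential inequality
$$\frac{d}{dp}\log\phi_{p,q}^1[\calA]\ \ge\ c_q\,\Delta_p(\calA),$$
valid for increasing events $\calA$, where $\Delta_p(\calA)$ is a suitable notion of influence-sum (or the "sharpness" constant coming from the BKKKL/Talagrand inequality), and $c_q$ absorbs the bounded density $q^{\pm1}$ that converts the random-cluster derivative into an edge-revealment quantity. Integrating from $p$ to $p+\delta$ then yields a bound of the form $\phi_{p+\delta,q}^1[\calC_h(2n,n)]\ge 1-(1-\phi_{p,q}^1[\calA_0])^{\,n^{c\delta}}$-type decay, provided one can show the total influence is at least of order $\log n$ whenever the crossing probability is bounded away from $0$ and $1$. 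I would set $\calA_0$ to be a crossing event that is implied by $\calC_h(3n,n)$ up to translation, so that $\phi_{p,q}^1[\calC_h(2n,n)]\ge\phi_{p,q}^1[\calC_h(3n,n)]$ gives the lower-bound input, and the final inequality \eqref{eq:3} comes out with the $1/\phi_{p,q}^1[\calC_h(3n,n)]$ prefactor as stated.

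The key steps, in order, are: (i) express the $p$-derivative of $\phi_{p,q}^1[\calC_h(2n,n)]$ via Russo's formula for the random-cluster model, getting $\frac{d}{dp}\phi_{p,q}^1[\calC_h(2n,n)]=\sum_e \frac{1}{p(1-p)}\,\mathrm{Cov}_{p,q}^1(\omega_e,\mathbbm 1_{\calC_h(2n,n)})$ up to constants, and bound each covariance below by $c_q\,\phi_{p,q}^1[e\text{ pivotal}]$ using finite energy; (ii) invoke the sharp-threshold/influence inequality: there is a universal constant such that $\sum_e \mathrm{Inf}_e(\calC_h(2n,n))\ge c\,\log\!\big(1/\!\max(\phi[\calC_h],1-\phi[\calC_h])\big)\cdot\phi[\calC_h](1-\phi[\calC_h])$, using that the maximal single-edge influence is $O(1/\log n)$; (iii) bound the maximal influence. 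The point here — and this is what the authors flag as the novel ingredient — is to get the $O(1/\log n)$ bound on $\max_e \mathrm{Inf}_e$ \emph{without} symmetric events or periodic boundary conditions: one reduces a large single-edge influence to the statement that a crossing of a long rectangle can be forced through a small box near $e$, and iterates a quasi-multiplicativity/RSW-type gluing at dyadic scales to convert "$e$ has influence $\ge\epsilon$" into "crossing probability of an $n$-scale annulus is $\ge$ some function growing to $1$," which combined with monotonicity bounds the number of scales, hence $\epsilon\le C/\log n$; (iv) feed (iii) into (ii), then (ii) into (i), and integrate the resulting differential inequality $\frac{d}{dp}\log\frac{\phi}{1-\phi}\gtrsim \log n$ over $[p,p+\delta]$ to conclude.

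The main obstacle is step (iii): controlling the maximal influence of a single edge on $\calC_h(2n,n)$ uniformly, using only FKG and the RSW-type output $(\mathcal B)$ from Proposition~\ref{prop:weak RSW}, rather than the self-duality-based symmetrization that classical proofs exploit. Concretely, one must argue that if some edge $e$ were highly influential then flipping it would change a crossing event with large probability, which localizes the "bottleneck" to a bounded-size region; the difficulty is then to relate such a local bottleneck to a bound on the number of dyadic scales at which an annulus crossing can fail, and to do so with boundary conditions that are not translation-invariant (the wired boundary breaks the symmetry). I expect this to require a careful choice of the events to which the sharp-threshold inequality is applied — likely a family of translates of $\calC_h(2n,n)$, or a "crossing in at least one of many disjoint boxes" event — so that no single edge is special, while still keeping the increasing-event structure intact. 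The comparison $\phi_{p,q}^1[\calC_h(2n,n)]\ge\phi_{p,q}^1[\calC_h(3n,n)]$ and the FKG-based gluing from Proposition~\ref{prop:weak RSW} should then supply the lower bound on crossing probabilities needed to close the estimate.
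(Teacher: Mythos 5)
There is a genuine gap, and it is exactly at the step you flag as the main obstacle, step~(iii). Your plan is to bound the \emph{maximal} single-edge influence of the fixed event $\calC_h(2n,n)$ by an RSW/annulus argument at dyadic scales. Two problems. First, quantitatively: a bound $\max_e \mathrm{Inf}_e = O(1/\log n)$ fed into the Graham--Grimmett inequality gives $\frac{d}{dp}\log\frac{\phi}{1-\phi}\gtrsim \log\log n$, not $\log n$; integrating over $[p,p+\delta]$ then produces a gain of order $(\log n)^{c\delta}$, which is far from the polynomial factor $n^{-c\delta}$ in \eqref{eq:3}. To get \eqref{eq:3} you need the maximal influence to be \emph{polynomially} small (or the total influence to be of order $\log n$ by another route). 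Second, and more seriously: the standard way to make a single-edge influence polynomially small is to dominate it by a four-arm-type probability and kill one arm per dyadic scale; killing an arm requires a lower bound on \emph{dual} (closed) crossings, i.e.\ an upper bound strictly below $1$ on primal crossing probabilities at every scale. Proposition~\ref{prop:weak RSW} only supplies \emph{lower} bounds on primal crossings, and only along a subsequence (a $\limsup$ statement), and the whole point of the paper is to avoid the self-duality that would supply the dual estimates. Note also that Proposition~\ref{prop:high probability horizontal crossing} as stated assumes nothing about crossing probabilities (no \eqref{eq:a} or \eqref{eq:b}), whereas your step~(iii) would make it conditional on $(\mathcal B)$.

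The paper circumvents all of this with an averaging device (Lemma~\ref{lem:averaging}) that you do not have. One considers the whole family $A_k=\calC_h(k,n)$ for $2n\le k\le 3n$ and exploits translation invariance of $\phi_p$: the identity $J_{A_k,p}(e)-J_{A_k,p}(\tau_{(1,0)}e)$ is controlled by $\phi_p[A_{k-1}]-\phi_p[A_{k+1}]$, and summing this telescopes over $k$, so for at least $n/2$ values of $k$ the influences are almost constant along horizontal translates ($|J_{A_k,p}(e)-J_{A_k,p}(\tau_{(1,0)}e)|\le 4/n$ for \emph{all} $e$). For such $k$ one argues a dichotomy: either every influence is at most $cn^{-1/3}$, and the logarithmic term in $f_p(A_k)$ is at least $\tfrac13\log n$; or some influence exceeds $cn^{-1/3}$, in which case roughly $n^{2/3}$ of its translates exceed $cn^{-1/3}/2$ and the \emph{sum} of influences is at least $\tfrac13\log n$. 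Either way $f_p(A_k)\ge\tfrac13\log n$ for most $k$, one $k\in\llb 2n,3n\rrb$ is then selected, and the inclusion $A_{3n}\subset A_k\subset A_{2n}$ produces exactly the prefactor $1/\phi_p[\calC_h(3n,n)]$ in \eqref{eq:3}. The authors explicitly remark that removing the averaging (i.e.\ doing what you propose for the single event $\calC_h(2n,n)$) would require knowing that $\phi_p[\calC_h(k,n)]$ and $\phi_p[\calC_h(k+1,n)]$ are polynomially close, which is not available. So your steps (i), (ii) and (iv) match the paper, but the proposal is missing the key idea that makes the argument close.
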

This type of statement has been widely used in the recent studies of phase
transition. It is based on sharp threshold results going back to
\cite{BouKahKal92} (we provide more details before the proof). The novelty of
the proof of the proposition above lies in the fact that we do not need to
symmetrize the event to which we wish to apply the sharp threshold. More
precisely, in \cite{BefDum12}, a similar sharp threshold result is obtained by
first proving the result on the torus, and then bootstrapping it to the plane.
Here we present a new method based on the sharp-threshold theorem of
\cite{GraGri11} that allows us to circumvent this difficulty.
\begin{proposition}[Bootstraping to exponential decay]\label{prop:exponential decay2}
Let $q\ge1$ and $p\in[0,1]$. If 
\begin{equation}\label{eq:c}\limsup_{n\rightarrow \infty} \> \phi_{p,q}^1\left[ \calC_h(2n,n) \right] =1\tag{$\mathcal C$},\end{equation}
 then for any $\delta\in(0,1-p]$, there exists $c=c(\delta, p, q)>0$ such that for any $n\ge1$,
$$\phi_{p+\delta,q}^1[\calC_h(2n, n)] \ge 1 - e ^{-cn}.$$
\end{proposition}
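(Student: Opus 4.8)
The plan is to run a classical renormalization (block) argument in the spirit of Aizenman--Chayes--Chayes--Newman / Pisztora, using assumption \eqref{eq:c} as the seed. The point is that \eqref{eq:c} combined with Proposition~\ref{prop:high probability horizontal crossing}, applied at parameter $p$ with the increment $\delta$, already gives a one-way statement: there exists a sequence $n_k\to\infty$ with $\phi^1_{p,q}[\calC_h(3n_k,n_k)]\to 1$, hence $\phi^1_{p+\delta,q}[\calC_h(2n_k,n_k)]\ge 1-\tfrac{1}{\phi^1_{p,q}[\calC_h(3n_k,n_k)]}n_k^{-c\delta}\to 1$ faster than any polynomial is not quite what we want --- what we actually need is a \emph{single} scale $N$ at which the crossing probability (for long rectangles in both directions, obtained by combining horizontal crossings of $\calC_h(2N,N)$ and of vertical translates via the FKG inequality) is close enough to $1$, say larger than $1-\eps_0$ for a small universal $\eps_0$, to start a supercritical site-percolation comparison. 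Such an $N$ exists because $\phi^1_{p+\delta/2,q}[\calC_h(2n,n)]\to1$ along a subsequence by the two displayed facts applied with $\delta/2$, and one extra half-increment of $\delta/2$ via Proposition~\ref{prop:high probability horizontal crossing} boosts the polynomial rate to a genuine $1-\eps_0$ at some finite scale; alternatively one simply notes $\phi^1_{p+\delta,q}[\calC_h(2n_k,n_k)]\to1$ directly from \eqref{eq:c} and Proposition~\ref{prop:high probability horizontal crossing} with increment $\delta$.

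Fix such a scale $N$. First I would upgrade the horizontal crossing of $\calC_h(2N,N)$ to a ``good block'' event: tile $\bbZ^2$ by translates of the box $\llb -2N,2N\rrb^2$ arranged so that a site $z\in\bbZ^2$ of the renormalized lattice corresponds to a box $B_z$, and declare $z$ \emph{good} if both the horizontal and the vertical $2N\times N$ sub-rectangles of $B_z$ and of the neighboring overlapping boxes are crossed in the ``hard'' direction, the events being chosen (as in the standard construction) so that good sites $z,z'$ that are adjacent in $\bbZ^2$ force the corresponding open crossings to intersect, and hence any cluster of good sites yields a connected open cluster in $\omega$ joining the corresponding boxes. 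By the FKG inequality (monotone increasing events) and the union bound over the finitely many crossing events defining a good block, $\phi^1_{p+\delta,q}[z\text{ good}]\ge 1-C\eps_0$ for a universal constant $C$, uniformly in $z$. The key structural input is that the random-cluster measure $\phi^1_{p+\delta,q}$, although not a product measure, \emph{stochastically dominates} a Bernoulli percolation of high density on the edges inside each block conditionally on the outside, by finite-energy / comparison-of-boundary-conditions estimates; more precisely the family $(\mathbbm 1_{z\text{ good}})_z$ dominates a $k$-dependent site percolation with marginals $1-C\eps_0$, and by the Liggett--Schonmann--Stacey theorem such a field dominates a genuine Bernoulli site percolation whose parameter tends to $1$ as $\eps_0\to0$. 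Choosing $\eps_0$ small enough puts us strictly above the site-percolation threshold on $\bbZ^2$, so the cluster of good sites containing (a neighborhood of) the origin has an exponential tail.

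Finally I would translate this back. If $\{0\lr x\}$ occurs in $\omega$ with $\|x\|=m$, then the open path must cross all the blocks it passes through, and in particular, on the renormalized lattice, there is a path of sites of length of order $m/N$ starting near the origin all of whose boxes are touched by the open cluster; standard surgery (separating ``good'' from ``bad'' along the path, or simply: an open crossing of a long thin rectangle forces a long sequence of \emph{non-}good-free behavior) shows that $\{\calC_h(2n,n)$ fails$\}$, which contains a dual crossing of the complementary rectangle, forces a long path/circuit of bad blocks, whose probability decays exponentially in $n/N$ by the subcritical (dual) site-percolation estimate. Hence $\phi^1_{p+\delta,q}[\calC_h(2n,n)]\ge 1-e^{-cn}$ with $c=c(\delta,p,q)>0$. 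The main obstacle is the stochastic-domination step: one must be careful that the random-cluster measure with wired boundary conditions restricted to a block, conditioned on the configuration outside, is dominated below (in FKG order) by a product measure with density close to $1$ uniformly in the conditioning --- this follows from the bounded-ratio / finite-energy property ($\phi^1_{p,q}(\omega_e=1\mid \omega_{E\setminus e})\ge p/(p+(1-p)q)$) together with the comparison between boundary conditions, and it is exactly this uniformity that makes the $k$-dependence argument go through.
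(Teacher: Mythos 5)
Your overall strategy (a seed scale from \eqref{eq:c}, good blocks, comparison with highly supercritical independent site percolation, exponential decay of bad clusters) is the classical renormalization route, but it has a genuine gap exactly at the step you yourself flag as ``the main obstacle'', and the gap is not repairable with the tools you invoke. The hypothesis \eqref{eq:c} gives crossing probabilities close to $1$ only under the \emph{infinite-volume wired} measure $\phi^1_{p,q}$. To dominate the field of good-block indicators by a product (or $k$-dependent) field with marginals close to $1$, you must bound from below the conditional probability that a block is good \emph{uniformly over the configuration outside the block}; by the domain Markov property and the comparison between boundary conditions, the worst case is the measure in the block with \emph{free} boundary conditions, and $\phi^0_{\Lambda,p,q}[\calC_h(2N,N)]$ being close to $1$ does \emph{not} follow from $\phi^1_{p,q}[\calC_h(2N,N)]$ being close to $1$ (the stochastic ordering between these measures goes the wrong way). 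The finite-energy bound you quote, $\phi(\omega_e=1\mid\cdot)\ge p/(p+(1-p)q)$, only yields domination from below by Bernoulli percolation at the \emph{fixed} density $p/(p+(1-p)q)$, which may well be subcritical; it does not produce a product measure ``with density close to $1$'', and the good-block indicators are not $k$-dependent under $\phi^1_{p,q}$, so Liggett--Schonmann--Stacey does not apply as stated. This boundary-condition issue is precisely what the paper's argument is engineered to avoid (see its closing remarks on not using the domain Markov property).

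The paper's actual proof takes a different, self-contained route. It upgrades \eqref{eq:c} not to a block domination but to the statement that the maximal number $\sfN(2K,K)$ of \emph{disjoint} horizontal crossings of $\llb -2K,2K\rrb\times\llb -K,K\rrb$ is at least of order $K/n_0$ with probability $\ge 1/2$, by an induction over dyadic scales $K_i=2^in_0$: a gluing lemma for families of disjoint paths (Lemma~\ref{sec:lemGluingManyPaths}, via FKG and Menger's theorem) passes many disjoint crossings from scale $i$ to scale $i+1$ at the cost of a constant factor in probability, and the Hamming-distance differential inequality (Lemma~\ref{lem:hamming1}) restores probability $\ge 1/2$ at the price of increasing $p$ by $\delta/i^2$, which is summable over scales. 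A final application of Lemma~\ref{lem:hamming1}, using that $\sfN(2K_i,K_i)$ equals the Hamming distance to the complement of $\calC_h(2K_i,K_i)$, converts ``linearly many disjoint crossings with probability $\ge 1/2$'' into $\phi^1_{p+3\delta,q}[\calC_h(2n,n)]\ge 1-e^{-cn}$. If you want to salvage a renormalization proof, you would need crossing estimates that are uniform in boundary conditions as an input, which is a strictly stronger hypothesis than \eqref{eq:c}.
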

These three propositions imply Theorem~\ref{thm:main} readily. Indeed, \eqref{eq:a} at $p$ and Proposition~\ref{prop:weak RSW} applied at $p$ imply \eqref{eq:b} at $p$. Proposition~\ref{prop:high probability horizontal crossing} applied to $p$ and $\delta/2$ implies \eqref{eq:c} at $p+\delta/2$. Proposition~\ref{prop:exponential decay2} applied to $p+\delta/2$ and $\delta/2$  concludes the proof.
\bigbreak
\paragraph{Notation}From now on, we fix $q\ge1$ and write $\phi_p$ instead of
$\phi_{p,q}^1$, and $\phi_{G,p}$ instead of $\phi_{G,p,q}^1$. Furthermore, for
an automorphism $T$  of the square lattice and an event $A$, we define the {\em image} of $A$ by $T$ as the set
$$B:=\{\omega:T^{-1}(\omega)\in A\},$$
where $T^{-1}(\omega)_e:=\omega_{T^{-1}(e)}$. Note that $\phi_p$ is symmetric under any automorphism $T$ of the lattice, and therefore $A$ and the image of $A$ by $T$ have the same probability. {\bf We will often refer to this fact without mentioning it}. Since we will use it extensively, we also introduce $\tau_xA$ to be the image of $A$ by the translation $\tau_x$ of vector $x$. 
\bigbreak
\paragraph{Organization}The paper is organized as follows. The next three sections correspond respectively to the proofs of the last three propositions. The last section is devoted to the proof of Theorem~\ref{main:corollary}. We included bibliographical comments and discussions on the scope of the proofs and the comparison with existing arguments at the end of each section. 
\section{Proof of Proposition~\ref{prop:weak RSW}}

Below, we will make extensive use of the FKG inequality (see
\cite[Theorem~3.8]{Gri06}) which states
that \begin{equation}\label{eq:FKG}\phi_p[A\cap B]\ge
  \phi_p[A]\phi_p[B]\tag{$\mathrm{FKG}$},\end{equation} for any
two increasing events $A$ and $B$. We recall that an event $A$ is increasing if for every
$\omega\in A$ and $\omega'\ge \omega$ (for the product ordering on
$\{0,1\}^{E_{\bbZ^2}}$), we also have  $\omega'\in A$. \bigbreak Since
we will use it repeatedly, let us recall a classical fact. Let $k,\ell>0$, and
$n,m\ge1$. If $m\ge n/k$, then
   \begin{align}\label{eq:combination}
\phi_p\left[ \calC_h(\ell n,n)\right] &\ge \phi_p\left[\calC_h(n+m,n)\right]^{2k\ell}.
\end{align}
In order to obtain this inequality, apply the FKG inequality to the events
$\tau_{(2mj,0)}\calC_h(n+m,n)$ for $2j\in\llb -k\ell,k\ell\rrb$ and
$\tau_{(2mj,0)}\widetilde \calC_h(n,n)$ with $2j\in\llb -k\ell+1,k\ell-1\rrb$,
where $\widetilde \calC_h(n,n)$ is the image of $\calC_h(n,n)$ by the rotation
of angle $\pi/2$ around the origin.
%In order to obtain this inequality, apply the FKG inequality to the events
%$\tau_{(mj,0)}\calC_h(n+m,n)$ for $j\in\llb -k\ell,k\ell\rrb$ and
%$\tau_{(mj,0)}\widetilde \calC_h(n,n)$ with $j\in\llb -k\ell+1,k\ell-1\rrb$,
%where $\widetilde \calC_h(n,n)$ is the image of $\calC_h(n,n)$ by the rotation
%of angle $\pi/2$ around the origin.
 \bigbreak For $n\ge 1$ and $-n \leq \alpha
\leq \beta \leq n$, define $\calH_n(\alpha,\beta)$ to be the event (illustrated
on Fig.~\ref{crossing:fig:eventH}) that there exists an open path in $\llb
-n,n\rrb^2$ from $\{-n\}\times\llb -n,n\rrb$ to
$\{n\}\times\llb\alpha,\beta\rrb$.
\begin{figure}[htbp]
\centering
\hfill
\begin{minipage}[t]{.44\linewidth}
\centering
  \includegraphics[width=.9\linewidth]{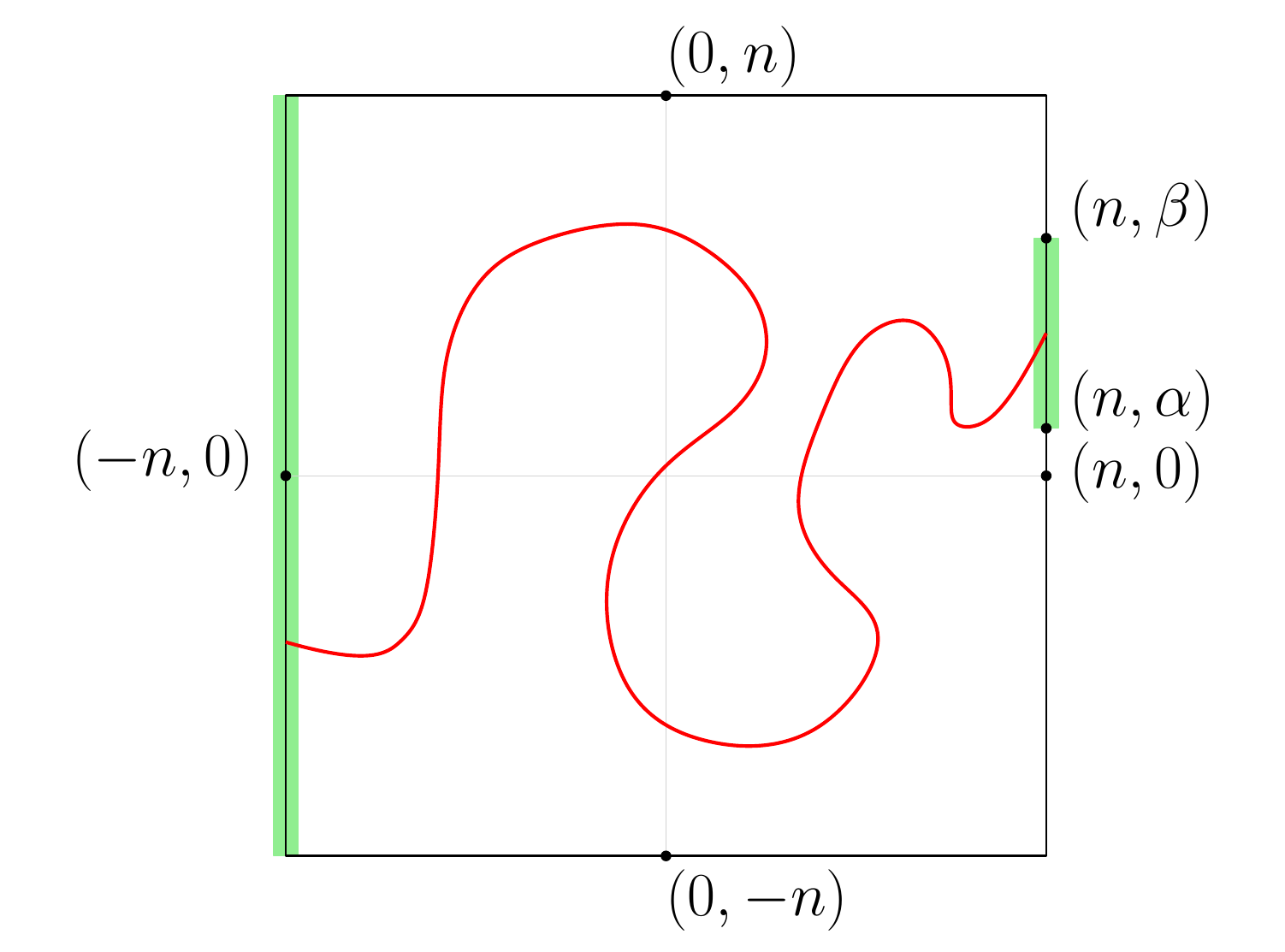}
  \caption{A diagrammatic representation of the event $\calH_n (\alpha,\beta)$}
  \label{crossing:fig:eventH}
\end{minipage}
\hfill
\hfill
\begin{minipage}[t]{.44\linewidth}
\centering
  \includegraphics[width=0.9\linewidth]{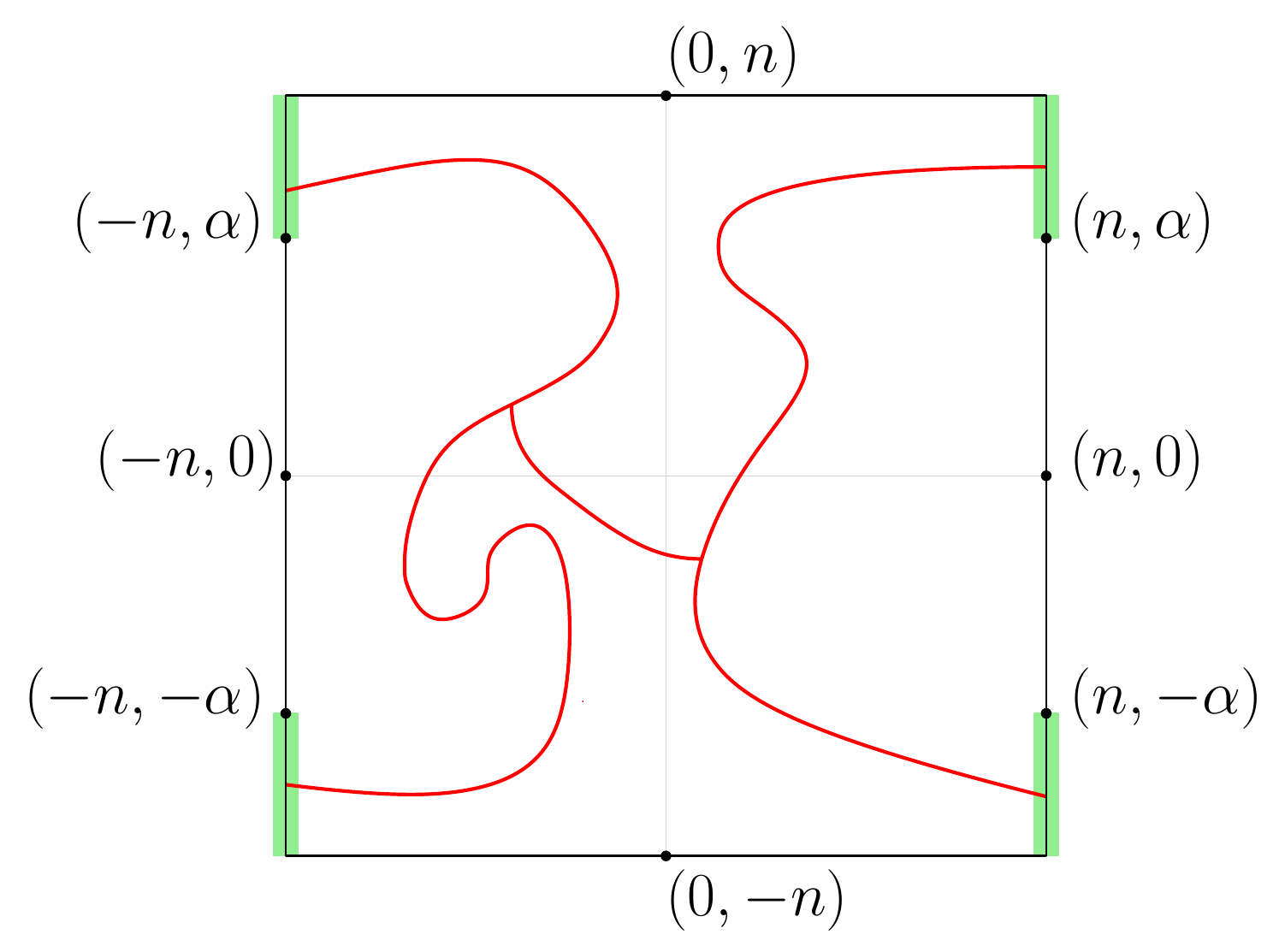}
  \caption{A diagrammatic representation of the event $\calX_n (\alpha)$}
  \label{crossing:fig:eventX}
\end{minipage}
\hfill
\end{figure}
The symmetry with respect to the $x$-axis implies that 
  $$\phi_p[ {\calH_n ( 0,n)}]\ge \tfrac12\phi_p[\calC_h(n,n)].$$
  Therefore, \eqref{eq:a} implies that 
\begin{equation}\label{eq:a'}c_0=\inf_{n\in \mathbb{N}}\phi_p[ {\calH_n ( 0,n)}] >0\tag{$\mathcal A'$}.\end{equation} 
 
 %\begin{equation}\label{eq:a'}c_0:=\inf_{n\in \mathbb{N}}h_n(n)=\inf_{n\in \mathbb{N}}\phi_p[ {\calH_n ( 0,n)}] >0\tag{$\mathcal A'$}.\end{equation} 

% Let $h_n: \llbracket 0,n \rrbracket\to [-1,1]$ be the function defined by
%   \begin{equation*}
%     h_n(\alpha)=\phi_p[ {\calH_n ( 0,\alpha)} ]-\phi_p[ {\calH_n(\alpha, n)}].
%   \end{equation*}
Fix $p$ such that \eqref{eq:a} is satisfied. Let $h_n(\alpha):=\phi_p[\calH_n(\alpha,n)]$. Note that $h_n$ is strictly
decreasing and $h_n(0) \ge c_0 $. Define $\alpha_n:= \max \lbrace \alpha \le n:
\: h_n(\alpha)\ge c_0/2 \rbrace$ so that 
    \begin{align}\label{eq:alpha2}
       & \phi_p[{\calH}_n(\alpha,n)] \ge
    c_0/2 &&\text{for every $0\le\alpha\le\alpha_n$ and}\\
   & \phi_p[{\calH}_n(0,\alpha)] \ge
    c_0/2  &&\text{for every $\alpha_n\le \alpha\le n$}.\label{eq:alpha1}
    \end{align}
    Eq.~\eqref{eq:alpha2} follows directly from the definition of $\alpha$. To show Eq.~\eqref{eq:alpha1}, first notice that one can assume
    $\alpha_n\le\alpha<n$, and then observe that in this case
    \begin{align}
      \label{eq:1}
      \phi_p[{\calH}_n(0,\alpha)]\ge\underbrace{\phi_p[\calH_n ( 0,n)]}_{\ge c_0}-
      \underbrace{\phi_p[\calH_n(\alpha+1,n)]}_{=h_n(\alpha+1)\le c_0/2}\ge c_0/2.
    \end{align}
    For $0\le \alpha \leq n$, let $\calX_{n}(\alpha)$ be the event (illustrated
    on Fig.~\ref{crossing:fig:eventX}) that there exists a connected component
    of $\omega$ in $\llb -n,n\rrb^2$ that intersects the line segments
    $\{-n\}\times \llb-n,-\alpha\rrb$, $\{-n\}\times \llb\alpha,n\rrb$,
    $\{n\}\times \llb-n,-\alpha\rrb$, and $\{n\}\times \llb\alpha,n\rrb$.
  
   Let $\calH^1$, $\calH^2$, $\calH^3$ and $\calH^4$ be the events obtained from $\calH_n(\alpha,n)$ by taking the successive images by the orthogonal symmetries with respect to the $x$ and $y$-axis. 
  Let $\calF$ be the image of $\calC_h(n,n)$ by the rotation of angle $\pi/2$ around the origin. 
  %event that there exists a path from $\llb-n,-\alpha\rrb\times\{-n\}$ to $\llb-n,-\alpha\rrb\times\{n\}$ in $\llb -n,n\rrb^2$ (it is a rotated version of $\calC_h(n,n)$). 
  Then for $\alpha \leq \alpha_n$, we find
\begin{align*} \label{eq.Xnbounded}
  \phi_p[\calX_n(\alpha)] &\stackrel{\hphantom{\,\rm (FKG)\,}}\geq  \phi_p[\calH^1\cap\calH^2\cap\calH^3\cap\calH^4\cap\calF]\\
  &\stackrel{\,\rm (FKG)\,}\ge \phi_p[\calH_n(\alpha,n)]^4\cdot \phi_p[\calC_h(n,n)]\\
  &\stackrel{\eqref{eq:alpha2}\text{,}\eqref{eq:a}}\ge \left(\frac{c_0}{4}\right)^4\cdot c_0.
\end{align*}
As a consequence, %\eqref{eq:a'} implies that
   \begin{equation}\label{eq:a''}c_1:=\inf_{n\in \mathbb{N}}\phi_p[\calX_n(\alpha_n)] >0\tag{$\mathcal A''$}.\end{equation} 
We now divide the proof in three cases:
\paragraph{Case 1: $\alpha_{n+\lfloor n/2\rfloor}\ge 2\,\alpha_{n}$ for $n$ large enough.} In such case, $\alpha_n$ would not be bounded by $n$ for every $n$, which is absurd. Thus, this case does not occur.

\paragraph{Case 2: $\alpha_n\ge n/2$ for infinitely many $n$.}  In such case, pick $n$ so that $\alpha_n\ge n/2$. Set $\calX$ to be the image of $\calX_n(\alpha_n)$ by rotation of angle $\pi/2$ around the origin. %Furthermore, let $\calX^i$, for $i=-3,\dots,3$ be the translates by $(ni,0)$ of $\calX^0$. 
Thus
  \begin{align*}
     \phi_p\left[ \calC_h(3n,n)\right] &\stackrel{\hphantom{\rm (FKG)}}\ge \phi_p\left[\bigcap_{i=-3}^3\tau_{(in,0)}\calX%\cap \calX^{-2}\cap\calX^{-1}\cap\calX^0\cap\calX^1\cap\calX^2\cap\calX^3
     \right]\\
     & \stackrel{\rm (FKG)}\ge\phi_p[\calX_n(\alpha_n)]^7\ \stackrel{\eqref{eq:a''}}\ge\  c_1^7. 
\end{align*}
As a consequence, the existence of infinitely such $n$ implies \eqref{eq:b}.
%
%Hence, we assume in the rest that there exists a constant $c_0 \ge 0$ such that for every positive $n$, 
%\begin{align} \label{eq:symmetry+triv}
%h_n(n)\ge c_0.
%\end{align}
%\medbreak
%    For all $0\le \alpha\le \alpha_{n}$, $ \phi_p[\calX_n(\alpha)]\ge c_1$.
  %  If $\alpha_{n}<n/4$, then for all
 %   $\alpha_{n}\le \alpha\le n/2$,
 %  $$\phi_p\left[\calH_n (0,\alpha)\right]\ge c_0/4 +\phi_p\left[ \calH_s(\alpha,n/2) \right]$$.

 \medbreak

\paragraph{Case 3: $\alpha_n< \lfloor n/2\rfloor $ and $\alpha_{n+\lfloor n/2\rfloor}< 2\,\alpha_{n}$ for infinitely many $n$.}  Fix $n$ satisfying the two previous inequalities. To lighten the notation, set $m:=\lfloor n/2\rfloor$ and $N:=n+m$. Consider the
  two square boxes
  \begin{align*}R&:=(-m,-\alpha_n)+\llb -N,N\rrb^2,\\
  R'&:=(m,-\alpha_n)+\llb -N,N\rrb^2.\end{align*}
  Let $\calE=\tau_{(-m,-\alpha_n)}\calH_N(0,2\alpha_n)$ and $\calE'$ be the image of $\calE $ by the reflection with respect to the $y$-axis. Then, $\llb -n,n\rrb^2$ is included in $R\cap R'$ since $\alpha_n\le m$ and it follows that $\calX_{n}(\alpha_n) \cap \calE\cap\calE'\subset {\calC}_h(N+2m,N)$. This, together with $\alpha_N < 2\alpha_n$, implies
   \begin{align}
    \phi_{p}[\calC_h(N+2m,N)]&\stackrel{\ \hphantom{\rm (FKG)}\ }\geq \phi_p[\calX_{n}(\alpha_n) \cap \cal E\cap\cal E']\nonumber \\
 &\stackrel{\ \rm (FKG)  \ }\ge   \phi_p[\calX_n(\alpha_n)]\cdot \phi_p[{\calH}_N(0,2 \alpha_n)]^2\nonumber \\
 &\stackrel{\eqref{eq:a''},\eqref{eq:alpha1}}\ge c_1\cdot (\tfrac12c_0)^2.\label{eq:beta}
  \end{align}
We can apply \eqref{eq:combination} to $N$ and $2m$ to deduce that 
 $\phi_p[\calC_h(3N,N)]\ge  c_2$
%  In the second inequality, we also used the symmetries of the measure $\phi_p$.
%  In the third inequality, we used \eqref{eq:a''} as well as \eqref{eq:a'} and the definition of $\alpha_N$ to deduce that
for some $c_2>0$ which depends on $c_1$ and $c_0$ only. Therefore, the existence of infinity many such $N$ implies \eqref{eq:b}. 
  \begin{figure}[htp]
    \centering
    \includegraphics[width=.58\linewidth]{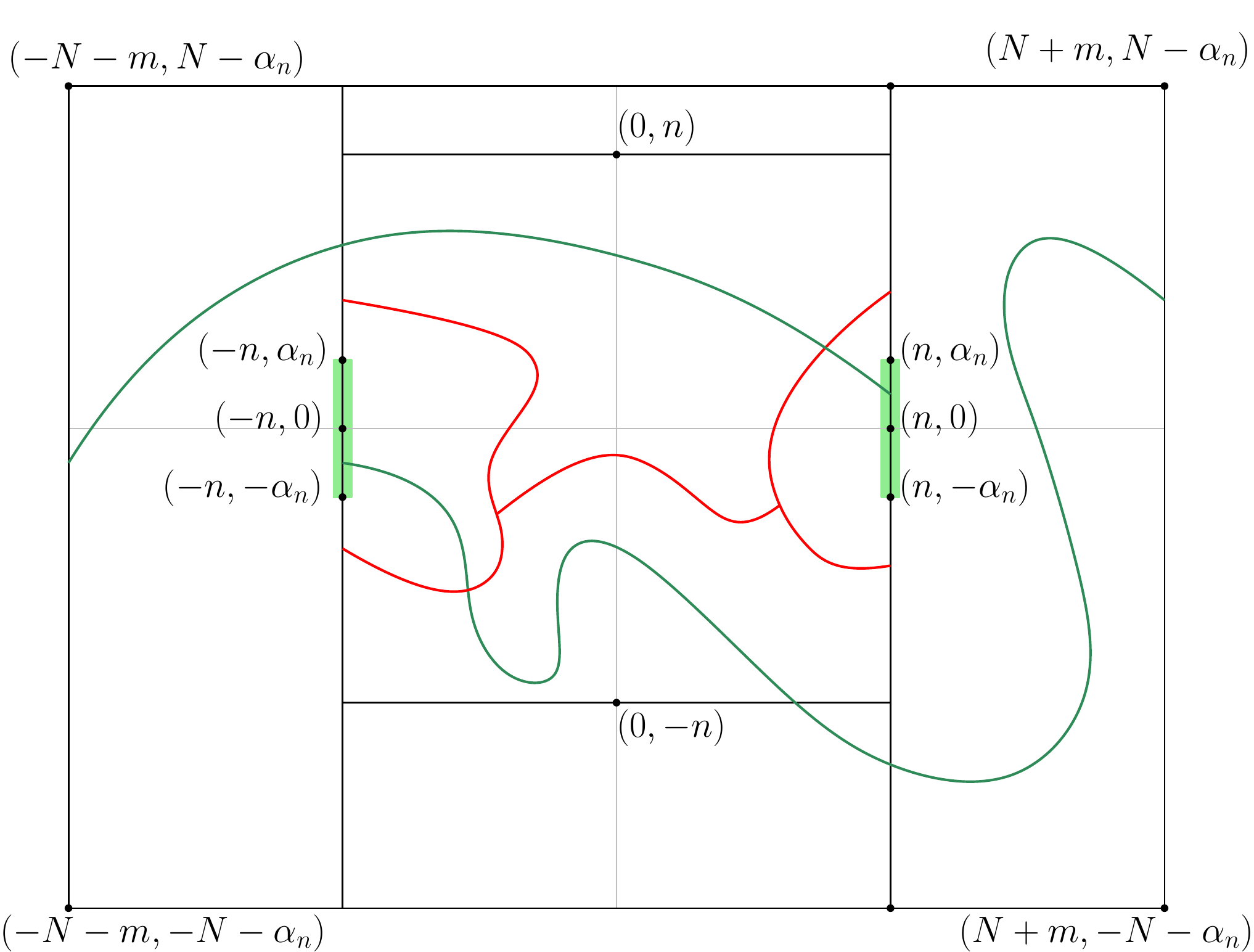}
    \caption{The simultaneous occurrence of $\calX_n(\alpha_n)$, $\calE$ and $\calE'$
      implies the existence of a horizontal crossing in $\llb -(N+2m),(N+2m)\rrb\times\llb -N,N\rrb$.}
    \label{crossing:fig:constr3}
  \end{figure}

\bigbreak

\noindent\textbf{Remarks and comments.}

\noindent{\bf 1.} In terms of the percolation model, as mentioned in \cite{Tas14b}, the proof only requires the FKG inequality. In particular, it does not involve independence, duality, or a conditioning with respect to the highest crossing.
\medbreak\noindent
{\bf 2.} In terms of the graph, planarity is clearly essential. The proof also requires the graph to have several symmetries: namely the axial symmetry with respect to the axis, the invariance under translation and the symmetry under the rotation of angle $\pi/2$ around the origin.
\medbreak\noindent
{\bf 3.} The fact that we prove a result for the limsup only is a draw back, but this will not be relevant for the rest of the proof.
\medbreak\noindent
{\bf 4.} One may prove stronger bounds on crossing probabilities, see e.g.\@ \cite{DumSidTas14,Dum13}.

\section{Proof of Proposition~\ref{prop:high probability horizontal crossing}}
Our goal is to prove a sharp threshold for the probability of an open path from
left to right. The starting point of the proof of such a statement is usually
the following simple differential equality \cite[Theorem~3.12]{Gri06}. Let $G$
be a finite subgraph of $\bbZ^2$ and $A$ an increasing event depending on the
states of edges in $G$ only. We have for every $p\in (0,1)$,
\begin{equation}
  \label{eq:Russo}
  \frac{d}{dp}\phi_{G,p}[A]=\frac1{p(1-p)}\sum_{e\in E_G}J_{A,G,p}(e),
\end{equation}
where $J_{A,G,p}(e):=\phi_{G,p}[{\bf
  1}_A\omega_e]-\phi_{G,p}[A]\phi_{G,p}[\omega_e]$.
% We will also use the sharp threshold theorem of \cite[Theorem 5.1]{GraGri11} (we
% use it for the random-cluster measure $\phi_{G,p}$, but it applies to more
% general measures). In order to state it, recall that the influence of an edge
% $e$ for the event $A$ is defined by
% The quantity $J_{A,G,p}(e)$ is directly related to the influence of the edge
% $e$, defined by
% $I_{A,G,p}(e):=\phi_{G,p}[A\:|\:\omega_e=1]-\phi_{G,p}[A\:|\:\omega_e=1]$. In
% particular a simple computation gives
% \begin{align}
%   \label{eq:2}
%   4J_{A,G,p}(e)\le I_{A,G,p}(e)\le \frac q{p(1-p)}  J_{A,G,p}(e).
% \end{align}
 
In order to prove a sharp threshold result, we will use the following result,
which is a straightforward consequence of \cite[Theorem 5.1]{GraGri11} (the
original result concerns a more general class of measures than that of the
random-cluster model). There exists a constant $c=c(p,q) > 0$ such that uniformly
in $G$ and $A$,
\begin{equation}\label{eq:ggsh0}
  \sum_{e\in E_G}J_{A,G,p}(e)\geq  c	\,\phi_{G,p}[A]( 1 - \phi_{G,p}[A])
  \log \left(\frac{c}{ \max\{ J_{A,G,p}(e):e\in E_G\}}\right).
  \end{equation}
  In order to avoid confusion, let us mention that \eqref{eq:ggsh0} is usually
  stated in terms of the notion of influence of an edge $e$ which is, up to
  constant, related to $J_{A,p}(e)$.
  % equal to
  % $J_{A,p}(e)/(\phi_{G,p}[\omega_e](1-\phi_{G,p}[\omega_e]))$. and we obtain
  % \eqref{eq:ggsh0} by noting that the influence of $e$ is always larger than
  % $4J_{A,p}(e)$.
  At this point, % if we choose $c\le1$,
  \eqref{eq:Russo} together with \eqref{eq:ggsh0} imply
  \begin{equation}\label{eq:ggsh1}
	  	\frac{d}{dp}\phi_{G,p}[A]\geq  c\cdot 	\,\phi_{G,p}[A]( 1 - \phi_{G,p}[A])
		\cdot f_{G,p}(A),
  \end{equation}
  where 
  $$f_{G,p}(A):= \max \left\lbrace 
    \log \left(\frac{c}{ \max\{J_{A,G,p}(e):e\in E_G\}} \right) , \sum_{e\in
      E_G} J_{A,G,p}(e) \right\rbrace.$$ We plan to apply this inequality to our
  context by proving that for our event, the influence of any edge $e$ is small.
  We face a tiny technical difficulty: we are working directly in infinite
  volume with $\phi_p$. For this reason, we introduce the following lemma which
  states an integrate infinite-volume version of \eqref{eq:ggsh1} (its proof
  can be skipped in a first reading). Let $J_{A,p}(e):=
  \phi_p[{\bf 1}_A\omega_e]-\phi_p[A]\phi_p[\omega_e]$ and
  $$f_{p}(A):= \max \left\lbrace 
	\log \left(\frac{c}{ \sup\{J_{A,p}(e):e\in E_{\bbZ^2}\}} \right) ,  \sum_{e\in E_{\bbZ^2}} J_{A,p}(e)
	\right\rbrace\in[0,\infty].$$
\begin{lemma}\label{thm:ivggsh1}
	% There exists a constant $c > 0$ such that, 
	% f
  For every $p\in (0,1)$, $\delta\in[0,1-p)$, and every event $A$
  depending on finitely many edges,
	%\red{WRITE THE INTEGRAL VERSION!}
	\begin{equation}\label{eq:ivggsh1}
       \log \left( \frac{\phi_{p+\delta}[A]}{1- \phi_{p+\delta}[A]} \cdot \frac{1-\phi_{p}[A]}{\phi_{p}[A]} \right)     
	  	\geq c \int_{p}^{p+\delta} f_s(A) \: ds .
  \end{equation}
\end{lemma}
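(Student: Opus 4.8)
The plan is to obtain \eqref{eq:ivggsh1} by integrating the finite-volume differential inequality \eqref{eq:ggsh1} and then passing to the infinite-volume limit. First I would fix the event $A$, which by hypothesis depends only on the states of edges in some finite subgraph $G_0$ of $\bbZ^2$, and consider an exhaustion $G_0\subset G_1\subset G_2\subset\cdots$ of $\bbZ^2$ by finite subgraphs. For each $G_k$ containing $G_0$, the event $A$ depends only on edges in $G_k$, so \eqref{eq:ggsh1} applies with $G=G_k$. Rewriting \eqref{eq:ggsh1} as
\begin{equation*}
\frac{d}{ds}\log\!\left(\frac{\phi_{G_k,s}[A]}{1-\phi_{G_k,s}[A]}\right)\ge c\, f_{G_k,s}(A),
\end{equation*}
since $\frac{d}{ds}\log\frac{x}{1-x}=\frac{x'}{x(1-x)}$, I would integrate this from $p$ to $p+\delta$ (both in $(0,1)$, using $\delta<1-p$) to get
\begin{equation*}
\log\!\left(\frac{\phi_{G_k,p+\delta}[A]}{1-\phi_{G_k,p+\delta}[A]}\cdot\frac{1-\phi_{G_k,p}[A]}{\phi_{G_k,p}[A]}\right)\ge c\int_p^{p+\delta} f_{G_k,s}(A)\,ds.
\end{equation*}

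Next I would take $k\to\infty$ on both sides. On the left, weak convergence $\phi_{G_k,s}\to\phi_s$ applied to the (finitely-supported, hence continuous in the product topology) indicator ${\bf 1}_A$ gives $\phi_{G_k,p}[A]\to\phi_p[A]$ and $\phi_{G_k,p+\delta}[A]\to\phi_{p+\delta}[A]$, and since $0<\phi_p[A]<1$ whenever $0<p<1$ (each configuration on the finite set of relevant edges has positive probability), the $\log$ of the ratio converges to the left-hand side of \eqref{eq:ivggsh1}. For the right-hand side I would argue that $\liminf_k \int_p^{p+\delta} f_{G_k,s}(A)\,ds \ge \int_p^{p+\delta} f_s(A)\,ds$; the cleanest route is to show $\liminf_k f_{G_k,s}(A)\ge f_s(A)$ pointwise in $s$ and invoke Fatou. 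Since $J_{A,G_k,s}(e)=\phi_{G_k,s}[{\bf 1}_A\omega_e]-\phi_{G_k,s}[A]\phi_{G_k,s}[\omega_e]\to J_{A,s}(e)$ for each fixed edge $e$ (again by weak convergence, as ${\bf 1}_A\omega_e$ and $\omega_e$ are local), both $\sum_{e\in E_{G_k}}J_{A,G_k,s}(e)$ and $\max\{J_{A,G_k,s}(e):e\in E_{G_k}\}$ behave correctly in the limit: the sum converges to $\sum_{e\in E_{\bbZ^2}}J_{A,s}(e)$ because for $e$ outside a fixed finite neighborhood of $G_0$ the term $J_{A,G_k,s}(e)$ can be controlled (it is nonnegative by FKG and tends to $J_{A,s}(e)$, with the total sum bounded via \eqref{eq:Russo} by $s(1-s)\frac{d}{ds}\phi_{G_k,s}[A]$, which is uniformly bounded after integration), and the max over $E_{G_k}$ tends to $\sup$ over $E_{\bbZ^2}$, so the argument of the logarithm term converges. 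Hence $f_{G_k,s}(A)\to f_s(A)$, and Fatou finishes the right-hand side.

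The main obstacle I anticipate is the convergence of the right-hand side, specifically justifying $\liminf_k f_{G_k,s}(A)\ge f_s(A)$ uniformly enough to integrate. The subtlety is twofold: first, one must ensure the tail of the sum $\sum_{e\in E_{G_k}}J_{A,G_k,s}(e)$ over edges far from $\mathrm{supp}(A)$ does not carry mass that disappears in the limit — this is handled by noting that $J_{A,G_k,s}(e)\ge 0$ (FKG) so there is no cancellation, and by the finite-energy / exponential-mixing estimates that make $\sum_e J_{A,s}(e)$ finite or, if infinite, make the $\liminf$ of the finite sums diverge anyway; second, one must check that we may take the $\log(c/\cdot)$ term's limit, which requires $\max_e J_{A,G_k,s}(e)\to\sup_e J_{A,s}(e)$ and that this supremum is positive (true since $A$ is non-trivial and depends on at least one edge, whose pivotality has positive probability). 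A careful but routine dominated/monotone convergence argument, combined with the pointwise convergence $J_{A,G_k,s}(e)\to J_{A,s}(e)$ and the boundedness of $\int_p^{p+\delta}\sum_e J_{A,G_k,s}(e)\,ds$ uniformly in $k$ (which follows from integrating \eqref{eq:Russo} and the fact that $\phi_{G_k,s}[A]\in[0,1]$), will close this gap and yield \eqref{eq:ivggsh1}.
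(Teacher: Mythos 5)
Your overall architecture coincides with the paper's: integrate \eqref{eq:ggsh1} in finite volume $\Lambda_n:=\llb -n,n\rrb^2$, pass to the limit on the left-hand side by weak convergence, and reduce the right-hand side via Fatou to the pointwise inequality $\liminf_n f_{\Lambda_n,s}(A)\ge f_s(A)$. The sum part of that inequality is handled exactly as you indicate: $J_{A,\Lambda_n,s}(e)\ge 0$ by FKG, so truncating to a fixed finite box and using pointwise convergence of each term gives $\liminf_n f_{\Lambda_n,s}(A)\ge\sum_{e}J_{A,s}(e)$ with no cancellation issues.

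The genuine gap is in the logarithmic term. There you need
$\limsup_n\,\max\{J_{A,\Lambda_n,s}(e):e\in E_{\Lambda_n}\}\le\sup\{J_{A,s}(e):e\in E_{\bbZ^2}\}$,
and this is a maximum over a \emph{growing} family of edges: pointwise convergence $J_{A,\Lambda_n,s}(e)\to J_{A,s}(e)$ only controls the edges inside a fixed finite box, and neither dominated convergence nor the uniform bound on $\int_p^{p+\delta}\sum_e J_{A,\Lambda_n,s}(e)\,ds$ (which bounds a sum, not a maximum, and in any case gives an upper bound of the wrong order) excludes the possibility that some edge $e_n$ near $\partial\Lambda_n$ keeps $J_{A,\Lambda_n,s}(e_n)$ bounded below, strictly above $\sup_e J_{A,s}(e)$. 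What is needed is an upper bound on $J_{A,\Lambda_n,s}(e)$ for edges far from the support of $A$ that is \emph{uniform in $n$}, and this requires a model-specific input rather than a convergence theorem: writing
$J_{A,\Lambda_n,s}(e)=\big(\phi_{\Lambda_n,s}[A\mid\omega_e=1]-\phi_{\Lambda_n,s}[A]\big)\phi_{\Lambda_n,s}[\omega_e]$
and invoking the domain Markov property together with the comparison between boundary conditions yields, for every $e\notin E_{\Lambda_k}$ and every $n\ge k$, the bound $J_{A,\Lambda_n,s}(e)\le\phi_{\Lambda_k,s}[A]-\phi_s[A]$; the right-hand side tends to $0$ as $k\to\infty$ and can therefore be made smaller than $\sup_e J_{A,s}(e)$ (when this supremum is positive). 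This monotonicity argument is the missing idea; the ``routine dominated/monotone convergence argument'' you invoke will not produce it. (A secondary point: applying Fatou on the right-hand side requires $f_{\Lambda_n,s}(A)\ge0$, which holds because the sum term in the maximum is non-negative by FKG; this is worth stating.)
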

%  \frac{\phi_{p_2}^1[A] 1-\phi_{p_1}^1[A]}{1- \phi_{p_2}^1[A] \phi_{p_1}^1[A]} 

\begin{proof} 
  For a positive integer $n$, let $\Lambda_n := \llbracket -n,n \rrbracket ^2$.
  Choosing $G=\Lambda_n$ and integrating \eqref{eq:ggsh1} between $p$ and
  $p+\delta$ gives
\begin{equation*}% \label{eq:firstivggsh1}
\log \left( \frac{\phi_{\Lambda_n,p+\delta}[A]}{1- \phi_{\Lambda_n,p+\delta}[A]} \cdot \frac{1-\phi_{\Lambda_n,p}[A]}{\phi_{\Lambda_n,p}[A]} \right)     
\geq c \int_{p}^{p+\delta} f_{\Lambda_n,s}(A)  \: ds   .
\end{equation*}
The definition of the infinite-volume measure implies that the left-hand side converges to the left-hand side of \eqref{eq:ivggsh1}. Thus, Fatou's lemma implies
\begin{equation*} %\label{eq:firstivggsh1}
\log \left( \frac{\phi_{p+\delta}[A]}{1- \phi_{p+\delta}[A]} \cdot \frac{1-\phi_{p}[A]}{\phi_{p}[A]} \right)     
\geq c \int_{p}^{p+\delta} \liminf_{n\rightarrow\infty}f_{\Lambda_n,s}(A) ds ,
\end{equation*}
so that it suffices to show that for any increasing event $A$ depending on finitely many edges, 
\begin{equation}\label{eq:9}\liminf_{n\rightarrow\infty} f_{\Lambda_n,s}(A)\ge f_s(A).\end{equation}
First, fix $k\le n$ and observe that
$$f_{\Lambda_n,s}(A)\ge  \sum_{e\in E_{\Lambda_n}}J_{A,\Lambda_n,s}(e)\ge
 \sum_{e\in E_{\Lambda_k}}J_{A,\Lambda_n,s}(e)$$
since $J_{A,\Lambda_n,s}(e)\ge 0$ by the FKG inequality. The definition of the infinite-volume measure implies that $J_{A,\Lambda_n,s}(e)$ tends to $J_{A,s}(e)$ as $n$ tends to infinity. Letting $n$ and then $k$ tend to infinity implies
\begin{equation}\label{eq:10}
  \liminf_{n\rightarrow\infty} f_{\Lambda_n,s}(A)\ge \sup_{k\ge1} \sum_{e\in
    E_{\Lambda_k}}J_{A,s}(e)=\sum_{e\in E_{\bbZ^2}}J_{A,s}(e).
\end{equation}
At this point, \eqref{eq:9} (and therefore the claim) would follow from
\begin{align}\label{eq:11}\liminf_{n\rightarrow\infty} f_{\Lambda_n,s}(A)&\ge\log\left(\frac{c}{\sup\{J_{A,s}(e):e\in  E_{\bbZ^2}\}}\right),\end{align} 
which we prove now. Let $k\le n$ and $e\notin E_{\Lambda_k}$, the domain Markov property and the comparison between boundary conditions (see \cite{Gri06}) imply that 
\begin{align*}J_{A,\Lambda_n,s}(e)&= \big(\phi_{\Lambda_n,s}[{\bf 1}_A|\omega_e=1]-\phi_{\Lambda_n,s}[A]\big)\phi_{\Lambda_n,s}[\omega_e]\\
&\le \phi_{\Lambda_k,s}[A]-\phi_{s}[A].\end{align*}
Now, since $ \sup\{J_{A,s}(e):e\in E_{\bbZ^2}\}>0$, we can use the definition of the infinite-volume measure to choose $k$ large enough so that 
$$\phi_{\Lambda_k,s}[A]-\phi_{s}[A]\le \sup\{J_{A,s}(e):e\in E_{\bbZ^2}\}.$$
We deduce that
$$\sup_{n\ge k}\ \sup\{J_{A,\Lambda_n,p}(e):e\in E_{\Lambda_n}\setminus E_{\Lambda_k}\} \le \sup\{J_{A,s}(e):e\in  E_{\bbZ^2}\}.$$
But the definition of the infinite-volume measure also implies that
$$\limsup_{n\rightarrow\infty} \: \sup\{J_{A,\Lambda_n,p}(e):e\in E_{\Lambda_k}\} \le \sup\{J_{A,s}(e):e\in  E_{\bbZ^2}\}.$$
Combining the last  two displayed inequalities gives 
\begin{align*}
  \liminf_{n\rightarrow\infty} f_{\Lambda_n,s}(A)&\ge \log \left(\frac{c}{\displaystyle\limsup_{n\rightarrow \infty}\ \sup\{J_{A,\Lambda_n,p}(e):e\in E_{\Lambda_n}\}} \right)\\
  &\ge\log\left(\frac{c}{\sup\{J_{A,s}(e):e\in E_{\bbZ^2}\}}\right)
\end{align*}
which is \eqref{eq:11}. \end{proof}

The previous lemma will be combined with the following lemma. Let $A_k=A_k(n):=\calC_h(k,n)$.
\begin{lemma}\label{lem:averaging}
For any $n$ large enough,
$$\sum_{k=2n}^{3n}f_p\big(A_k\big)\ge \tfrac16n\log n.$$
\end{lemma}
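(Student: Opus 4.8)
The plan is to bound $\sum_{k=2n}^{3n} f_p(A_k)$ from below by controlling $\sup_{e} J_{A_k,p}(e)$ uniformly in $k$. Recall that $f_p(A_k) \ge \log(c/\sup_e J_{A_k,p}(e))$, so it suffices to show that for a positive proportion of the values $k \in \llb 2n,3n\rrb$, we have $\sup_{e} J_{A_k,p}(e) \le n^{-c'}$ for some fixed $c' > 0$; summing the logarithm over $\Theta(n)$ such values then gives $\Theta(n\log n)$, and tuning the constants yields the $\tfrac16 n\log n$ bound. (If instead $f_p(A_k)$ is large because the sum $\sum_e J_{A_k,p}(e)$ is large for many $k$, we are done even more easily, since each such term is itself of order $\log n$ by the other branch of the max — so we may assume we are in the "small influence'' regime.)

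The key step is the influence bound. Fix an edge $e = \{u,v\}$ and an index $k$. The quantity $J_{A_k,p}(e)$ is, up to the factor $\phi_p[\omega_e]\in(0,1)$, equal to $\phi_p[A_k \mid \omega_e=1] - \phi_p[A_k]$. A pivotality/exploration argument shows that for $A_k$ to have nonnegligible influence at $e$, the edge $e$ must be "pivotal-like'': conditionally on $\omega_e=1$ there must be an open path realizing $\calC_h(k,n)$ through $e$, while conditionally there is none — so $e$ lies on an open horizontal crossing and its two endpoints are connected, respectively, to the left and right sides of $\llb -k,k\rrb\times\llb -n,n\rrb$ through $e$. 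The classical second-moment / square-root trick (as in the standard sharp-threshold arguments for crossing probabilities, cf. \cite{BefDum12}) then bounds the influence of $e$ by roughly the probability that $e$ is connected to both the left and right sides, which by FKG and translation invariance decays polynomially in the distance from $e$ to the nearer vertical side of the box. Crucially, each edge $e$ in $\llb -k,k\rrb\times\llb -n,n\rrb$ belongs to the box $\llb -k',k'\rrb\times\llb -n,n\rrb$ for all $k' \le k$ with $k' \ge$ (the horizontal coordinate of $e$), so averaging over $k\in\llb 2n,3n\rrb$ means that for most $k$ a given $e$ is at distance $\gtrsim n$ from the left or right side — this is exactly why we sum over $k$ rather than fixing one box. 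Concretely, for fixed $e$, the number of $k\in\llb 2n,3n\rrb$ for which $e$ is within distance $n/10$ of a vertical side of the box is $O(n/10)$, so for a $(1-o(1))$-fraction of indices $k$ every edge that could have large influence is "deep'' inside the box, forcing $\sup_e J_{A_k,p}(e)\le n^{-c'}$.

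The main obstacle — and the technically delicate part — is making the influence bound uniform and genuinely polynomial. Two subtleties arise. First, $J_{A_k,p}(e)$ must be controlled for edges $e$ both inside and outside the box $\llb-k,k\rrb\times\llb-n,n\rrb$: edges outside the box have zero influence since $A_k$ does not depend on them, so one only worries about interior edges, and the relevant connection event is "$e$ connected to the left side and to the right side of the box''. Second, to get a polynomial rather than merely $o(1)$ bound one needs an RSW-type or exponential-decay-free estimate on the one-arm-to-a-side probability; but here we are allowed to invoke the earlier propositions — in particular, under hypothesis \eqref{eq:a} at $p$, Proposition~\ref{prop:weak RSW} gives \eqref{eq:b}, which via \eqref{eq:combination} upgrades to crossing estimates in long rectangles at $p$, and these in turn give the needed polynomial decay of the two-sided connection probability (the probability that a fixed edge is joined to both far sides of a rectangle of width $\asymp n$ is at most $n^{-c'}$ for some $c'>0$). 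Assembling: for each of the $\Theta(n)$ "good'' indices $k$, $f_p(A_k) \ge \log(c\, n^{c'}) \ge \tfrac{c'}{2}\log n$ for $n$ large, and summing over these indices gives $\sum_{k=2n}^{3n} f_p(A_k) \ge \tfrac16 n \log n$ after choosing $n$ large enough to absorb constants.
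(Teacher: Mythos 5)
Your approach is genuinely different from the paper's, and it contains gaps that I do not think can be repaired within the scope of this lemma. The most serious one: Lemma~\ref{lem:averaging}, like Proposition~\ref{prop:high probability horizontal crossing}, is stated for \emph{every} $p\in[0,1]$ and $q\ge1$, with no hypothesis such as \eqref{eq:a} or \eqref{eq:b} (the price of this generality is precisely the factor $1/\phi_p[\calC_h(3n,n)]$ in \eqref{eq:3}). Your proof leans on an RSW input to get polynomial decay of the probability that an edge is connected to both vertical sides of the box. Even granting \eqref{eq:a} at $p$, Proposition~\ref{prop:weak RSW} only produces a $\limsup$, i.e.\ crossing bounds along a subsequence of scales, which is not enough to run a multi-scale arm-type estimate at every $n$; and for supercritical $p$ the two-sided connection probability does not decay at all, so the bound ``influence $\le$ probability of a two-sided connection'' is vacuous there (the covariance $J_{A_k,p}(e)$ is still small in that regime, but for reasons --- the need for a dual crossing --- whose quantification would require the dual estimates this paper is deliberately avoiding). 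There is also a quantifier slip in your averaging step: for \emph{every} $k$ the box $\llb -k,k\rrb\times\llb -n,n\rrb$ contains edges adjacent to its vertical sides, so the claim that for most $k$ ``every edge that could have large influence is deep inside the box'' is false as stated; averaging over $k$ moves the box, but each box carries its own boundary edges, whose influence you have not controlled.

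The paper's argument uses no percolation estimate at all, and the averaging over $k$ serves a different purpose than the one you assign to it. One compares $J_{A_k,p}(e)$ with $J_{A_k,p}(\tau_{(1,0)}e)$ and uses the inclusion $A_{k+1}\subset A_k\cap\tau_{(1,0)}A_k$ together with $A_k\cup\tau_{(1,0)}A_k\subset\tau_{(1,0)}A_{k-1}$ to obtain $|J_{A_k,p}(e)-J_{A_k,p}(\tau_{(1,0)}e)|\le\phi_p[A_{k-1}]-\phi_p[A_{k+1}]$. The sum of the right-hand side over $k\in\llb 2n,3n\rrb$ telescopes to at most $2$, so for at least $n/2$ indices $k$ the influences are almost constant along horizontal translates (within $4/n$ of one another). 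For such $k$ a dichotomy applies: either every influence is at most $cn^{-1/3}$, and the logarithmic branch of $f_p$ gives $f_p(A_k)\ge\tfrac13\log n$; or some edge has influence at least $cn^{-1/3}$, in which case order $n^{2/3}$ of its horizontal translates have influence at least $cn^{-1/3}/2$ and the summation branch gives $f_p(A_k)\ge\tfrac{c^2}{16}n^{1/3}\gg\log n$. Summing over the $n/2$ good indices yields the claim. I recommend reworking your proof around this mechanism rather than around pivotality estimates.
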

Before proving this lemma, let us show how it can be used to conclude the proof.
Applying the above inequality to the right-hand-side of \eqref{eq:ivggsh1}, we
obtain
$$\sum_{k=2n}^{3n}\log \left( \frac{\phi_{p+\delta}[A_k]}{1- \phi_{p+\delta}[A_k]} \cdot \frac{1-\phi_{p}[A_k]}{\phi_{p}^1[A_k]} \right) \ge \frac{c\delta}{6} n\log n.$$
As a consequence, there exists $k \in\llb2n, 3n\rrb$ such that 
$$ \log \left( \frac{\phi_{p+\delta}[A_k]}{1- \phi_{p+\delta}[A_k]} \cdot \frac{1-\phi_{p}[A_k]}{\phi_{p}[A_k]} \right) \ge \frac{c\delta}{6} \log n,$$
or after applying the exponential,
\begin{align}\label{eq:aaaa}
\frac{1}{1- \phi_{p+\delta}[A_k]\phi_{p}[A_k]} ~\ge~ \frac{\phi_{p+\delta}[A_k]}{1- \phi_{p+\delta}[A_k]} \cdot \frac{1-\phi_{p}[A_k]}{\phi_{p}[A_k]} ~ \ge~ n^{c\delta/6}.
\end{align}
The fact that $A_{3n}\subset A_k\subset A_{2n}$ implies that
\begin{align*}
\phi_{p+\delta}[A_{2n}]\ge \phi_{p+\delta}[A_{k}] &\stackrel{\eqref{eq:aaaa}}\ge 1 - \frac{1}{\phi_{p}[A_{k}]} n^{-c\delta/6}\ge 1 - \frac{1}{ \phi_{p}[A_{3n}]} n^{-c\delta/6},
\end{align*}
which is the claim of Proposition~\ref{prop:high probability horizontal crossing}. We therefore only need to prove Lemma~\ref{lem:averaging}.
\begin{proof}[Proof of Lemma~\ref{lem:averaging}]
Let $e\in E_{\bbZ^2}$. We have\begin{align*}
J_{A_k, p}(e)&=\phi_{p}[{\bf 1}_{A_k}\omega_e]-\phi_{p}[A_k]\phi_{p}[\omega_e]\\
&= \phi_{p}[{\bf 1}_{\tau_{(1,0)} A_k}\omega_{\tau_{(1,0)}  e}]-\phi_{p}[A_k]\phi_{p}[\omega_{\tau_{(1,0)}  e}]\\
&= J_{A_k, p}(\tau_{(1,0)}  e)+ \phi_{p}[({\bf 1}_{\tau_{(1,0)}  A_k}-{\bf 1}_{A_k})\omega_{\tau_{(1,0)}  e}].
\end{align*}
Since both ${\bf 1}_{\tau_{(1,0)}  A_k}$ and ${\bf 1}_{A_k}$ are equal to 1 (respectively 0) on $A_{k+1}$ (respectively the complement of $\tau_{(1,0)}A_{k-1}$), we deduce that
\begin{equation}| J_{A_k,p}(e)- J_{A_k,p}(\tau_{(1,0)}  e)|\le \phi_{p}[A_{k-1}]- \phi_{p}[A_{k+1}].\label{eq:aaab}\end{equation}
Since the sequence of events $(A_k)$ is decreasing, we deduce that 
$$\sum_{k=2n}^{3n}\phi_p[A_{k-1}]-\phi_p[A_{k+1}]=\phi_{p}[A_{2n-1}]+\phi_{p}[A_{2n}]-\phi_{p}[A_{3n}]
-\phi_p[A_{3n+1}]\le 2,$$ and therefore there exists a set $\calK\subset
\llbracket 2n,3n \rrbracket$ of cardinality at least $n/2$ such that for any
$k\in \calK$ and any $e\in E_{\bbZ^2}$,
$$| J_{A_k,p}(e)- J_{A_k,p}(\tau_{(1,0)}  e)|\stackrel{\eqref{eq:aaab}}\le \phi_{p}[A_{k-1}]-\phi_{p}[A_{k+1}]\le \frac4n.$$
Now, for $k\in \calK$, one has that 
\begin{equation}\label{eq:aaac}f_p(A_k)\ge \frac13\log n\end{equation} as can be seen by dividing into the two following cases:
\begin{description}
\item[{\bf Case 1:}] 
For every  $e\in E_{\bbZ^2}$, $ J_{A_k, p}(e)\le \frac c{n^{1/3}}$. Then the definition of $f$ gives
$$f_{p}(A_k) \ge \tfrac{1}{3} \log n.$$
\item[{\bf Case 2:}] 
There exists $e\in E_{\bbZ^2}$ such that $J_{A_k, p}(e)\ge \frac c{n^{1/3}}$. In
such case, $ J_{A_k, p}(\tau_{(s,0)}e)\ge \frac c{2n^{1/3}}$ for any edge $\tau_{(s,0)} e$ with $s\le \frac{c}{8} n^{2/3}$. Therefore,
\begin{align*}
f_{p}(A_k) &\ge \sum_{s=0}^{\lfloor \tfrac{c}{8}n^{2/3}\rfloor}
J_{A_k,p}(\tau_{(s,0)}e)\ge \sum_{s=0}^{\lfloor \tfrac{c}{8}n^{2/3}\rfloor}
\tfrac c2 n^{-1/3}\ge \tfrac{c^2}{16}n^{1/3} \ge \tfrac13\log n.
\end{align*}
provided $n$ is large enough.
\end{description}
Summing over every $k\in\llb 2n,3n\rrb$ gives
$$\sum_{k=2n}^{3n} f_{p}(A_k) \ge \sum_{k \in \calK} f_{p}(A_k) \stackrel{\eqref{eq:aaac}}\ge \tfrac{1} {6} n \log n.$$
\end{proof}

\bigbreak

\noindent\textbf{Remarks and comments.}

\medbreak\noindent {\bf 1.} Sharp threshold theorems first emerged in the context of Boolean functions (see e.g. \cite{BouKahKal92} and references therein). 
A rudimentary version of the method of sharp threshold appears in the work of Russo and it was used it in the context of percolation  \cite{Rus81}.
%The first use of the method of sharp threshold in the context of percolation goes back to the work of Russo \cite{Rus81}. 
Inequality~\ref{eq:ggsh0} was first used for percolation by Bollob\`as and Riordan \cite{BolRio06,BolRio06c}. It has since then found many other applications, thanks in particular to the generalization \cite{GraGri06} to the non-Bernoulli case. It was in particular instrumental in \cite{BefDum12,duminil2014phase}.
\medbreak\noindent
{\bf 2.} Lemma~\ref{lem:averaging} should hold without averaging, hence showing that $\calC_h(k,n)$ always satisfies a sharp threshold. Unfortunately, in order to prove such a result, one should prove that $\phi_p[\calC_h(k,n)]$ and $\phi_p[\calC_h(k+1,n)]$ are always polynomially close to each others. Note that if such a result would be true, we could apply it to $k=3n$ and obtain Proposition~\ref{prop:high probability horizontal crossing} with $3n$ instead of $2n$ on the left-hand side. 
\medbreak\noindent
{\bf 3.} Historically, sharp threshold theorems were often proved by using events which are invariant under translations. Such strategies required to work on a torus. In the case of models with dependencies, translating results obtained with periodic boundary conditions to results in the plane were often very difficult. Lemma~\ref{lem:averaging} enables to avoid this difficulty.
\medbreak\noindent
{\bf 4.} We expect Lemma~\ref{lem:averaging} to have further applications to the theory of sharp thresholds. Indeed, we used very little on the events $A_n$, namely that they were forming a decreasing sequence of events and that $A_n\Delta\tau_{(1,0)}(A_n)\subset A_{n-1}\setminus A_{n+1}$. Other sequences of events satisfy similar properties (for instance $B_n=\{\llb -n,n\rrb^2\leftrightarrow \infty\}$ or even $C_n=\{E_n\leftrightarrow F\}$ where $E_n$ is an increasing sequence of sets with $F\cap E_n=\emptyset$). 
\medbreak\noindent
{\bf 5.} The idea of proving that all $J_{A,p}(e)$ are small was already present in \cite{GraGri06}. Following their strategy, $J_{A_n,p}(e)$ would be bounded by the probability of having an open path going to distance $2n$. This bound could be used to prove that $p_c\le \sqrt q/(1+\sqrt q)$, but would be insufficient to prove Theorem~\ref{main:corollary}. Furthermore, the proof of Lemma~\ref{lem:averaging} is sufficiently elementary to represent a good alternative to the strategy proposed in \cite{GraGri06}.

\section{Proof of Proposition~\ref{prop:exponential decay2}}

In this section, $A^c$ denotes the complement of the event $A$. For an
increasing event $A$, let $H_{A^c}(\omega)$ be the Hamming distance from
$\omega$ to $A^c$ in $\{0,1\}^{E_G}$, defined as the minimal number of edges
that need to be turned to closed in order to go from $\omega$ to a configuration
in $A^c$. We use the following inequality, stated as a lemma.
\begin{lemma} \label{lem:hamming1}
Let $p\in[0,1]$, $\delta\in[0,1-p]$, and $A$ an increasing event depending on finitely many edges. Then
\begin{equation}\label{eq:hamming1}\phi_{p+\delta}[A]\ge 1- \exp\big(-4\delta\, \phi_{p}[H_{A^c}]\big).\end{equation}
\end{lemma}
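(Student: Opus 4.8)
The plan is to prove the estimate first in finite volume, via a differential inequality for $\phi_{G,p}[A^c]$, and then pass to the limit $G\uparrow\bbZ^2$. Fix a finite edge-set $F$ on which $A$ depends; for any finite subgraph $G\supset F$ of $\bbZ^2$, the quantity $H_{A^c}$ is a bounded function of $(\omega_e)_{e\in F}$ only, since to bring a configuration of $A$ into $A^c$ one never needs to close an edge outside $F$. Hence $\phi_{G,p}[H_{A^c}]\to\phi_p[H_{A^c}]$ and $\phi_{G,p+\delta}[A]\to\phi_{p+\delta}[A]$ as $G\uparrow\bbZ^2$. The boundary cases $\delta=0$, $p=0$, $p+\delta=1$ all make \eqref{eq:hamming1} trivial (the right-hand side being $0$, or $A$ being the full space), so we may assume $[p,p+\delta]\subset(0,1)$; and since a finite graph charges every configuration, $\phi_{G,s}[A^c]>0$ on this interval unless $A$ is the full space. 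Thus it suffices to prove, for every such $G$,
\begin{equation*}
\frac{d}{ds}\log\phi_{G,s}[A^c]\le -4\,\phi_{G,s}[H_{A^c}],
\end{equation*}
and then to integrate this between $p$ and $p+\delta$.

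The heart of the matter is the bound $\sum_{e\in E_G}J_{A,G,p}(e)\ge\phi_{G,p}[A^c]\,\phi_{G,p}[H_{A^c}]$, which, combined with Russo's identity \eqref{eq:Russo} and $p(1-p)\le\tfrac14$, yields the displayed differential inequality. To prove it, write $N(\omega)=|E_\omega|$ for the number of open edges. Then $\sum_{e\in E_G}J_{A,G,p}(e)=\phi_{G,p}[\mathbf{1}_A N]-\phi_{G,p}[A]\phi_{G,p}[N]=\phi_{G,p}[A]\phi_{G,p}[A^c]\big(\phi_{G,p}[N\mid A]-\phi_{G,p}[N\mid A^c]\big)$. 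Since $A$ is increasing and $q\ge 1$, the FKG inequality \eqref{eq:FKG} gives the stochastic domination $\phi_{G,p}[\,\cdot\mid A^c]\preceq\phi_{G,p}\preceq\phi_{G,p}[\,\cdot\mid A]$, so (Strassen) there is a coupling $(\omega,\omega')$ of these two conditioned measures with $\omega\le\omega'$ almost surely; note $\omega\in A^c$ and $\omega'\in A$. Closing in $\omega'$ exactly the edges of $E_{\omega'}\setminus E_\omega$ produces the configuration $\omega\in A^c$, hence $H_{A^c}(\omega')\le|E_{\omega'}\setminus E_\omega|=N(\omega')-N(\omega)$. Taking expectations gives $\phi_{G,p}[N\mid A]-\phi_{G,p}[N\mid A^c]\ge\phi_{G,p}[H_{A^c}\mid A]$, and since $H_{A^c}$ vanishes on $A^c$ this rearranges to the claimed inequality.

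It remains to assemble the pieces. The function $H_{A^c}$ is increasing (if $\omega\le\omega'$ and closing a set $S$ brings $\omega'$ into $A^c$, then closing $S\cap E_\omega$ brings $\omega$ below $\omega'\setminus S\in A^c$), and $\phi_{G,s}$ is stochastically increasing in $s$ for $q\ge1$, so $\phi_{G,s}[H_{A^c}]\ge\phi_{G,p}[H_{A^c}]$ for $s\ge p$. Integrating the differential inequality from $p$ to $p+\delta$ then gives $\phi_{G,p+\delta}[A^c]\le\phi_{G,p}[A^c]\,e^{-4\delta\phi_{G,p}[H_{A^c}]}\le e^{-4\delta\phi_{G,p}[H_{A^c}]}$, i.e. $\phi_{G,p+\delta}[A]\ge 1-e^{-4\delta\phi_{G,p}[H_{A^c}]}$, and letting $G\uparrow\bbZ^2$ yields \eqref{eq:hamming1}. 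The one genuinely new point, and the step I expect to be the main obstacle, is the coupling argument of the second paragraph: recognising the Margulis--Russo sum as the covariance $\mathrm{Cov}_{\phi_{G,p}}(\mathbf{1}_A,N)$, converting the difference of conditional means into the expected number of edges one must add to pass from a configuration of $A^c$ to one of $A$, and observing that this expected ``sprinkling cost'' dominates the expected Hamming depth $\phi_{G,p}[H_{A^c}]$. The rest is routine Russo-type bookkeeping together with a standard finite-volume approximation.
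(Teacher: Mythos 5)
Your proof is correct, and its overall skeleton (Russo's formula, a differential inequality for $\log\phi_{G,s}[A^c]$, integration using the monotonicity of $s\mapsto\phi_{G,s}[H_{A^c}]$, and a finite-volume limit) coincides with the paper's. The one place where you genuinely diverge is the proof of the central correlation inequality $\sum_{e\in E_G}J_{A,G,p}(e)\ge\phi_{G,p}[A^c]\,\phi_{G,p}[H_{A^c}]$. The paper gets it in one line: since $\mathbf{1}_{A^c}H_{A^c}=0$, the covariance equals $\phi_{G,p}[\mathbf{1}_{A^c}(|\omega|-H_{A^c})]-\phi_{G,p}[A^c]\phi_{G,p}[|\omega|]$ up to sign, and one applies FKG to the decreasing function $\mathbf{1}_{A^c}$ and the increasing function $|\omega|-H_{A^c}$. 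You instead decompose the covariance as $\phi_{G,p}[A]\phi_{G,p}[A^c]\bigl(\phi_{G,p}[N\mid A]-\phi_{G,p}[N\mid A^c]\bigr)$, upgrade FKG to the stochastic domination $\phi_{G,p}[\,\cdot\mid A^c]\preceq\phi_{G,p}[\,\cdot\mid A]$, and use a Strassen monotone coupling to bound the gap in conditional means from below by $\phi_{G,p}[H_{A^c}\mid A]$. Both arguments rest on the same combinatorial fact (the $1$-Lipschitz relation between $H_{A^c}$ and the number of open edges); the paper's version is shorter and needs only the bare FKG inequality, while yours is more transparently probabilistic and makes explicit that the covariance measures an expected ``sprinkling cost'', at the price of invoking Strassen's theorem for the conditioned measures. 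Your treatment of the side issues (that $H_{A^c}$ depends only on the edges $A$ depends on, that $H_{A^c}$ is increasing, and the degenerate cases) is also correct and in fact slightly more careful than the paper's.
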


\begin{proof}
%We will use the following differential inequality 
%(see \cite[Thm.~2.56]{Gri06} or \cite{GriPiz97})
%  \begin{align}\label{eq:hamming}
%    \frac{d}{dp}\log(\phi_{G,p,q}^1[A])\ge\frac{\phi_{G,p,q}^1[H_A]}{p(1-p)}.
%  \end{align}
%For completeness, let us first start by justifying \eqref{eq:hamming} using
%\eqref{eq:Russo} only. 
  Let $G$ be a finite subgraph of $\bbZ^2$ such that $A$ is measurable with
  respect to the state of the edges of $G$. Let $|\omega|=\sum_{e\in
    E_G}\omega_e$. The facts that ${\bf 1}_{A^c}H_{A^c}=0$ and
  $|\omega|-H_{A^c}$ is increasing imply that
  \begin{align}\label{eq:hamming}
    \frac{d}{dp}\log(1-\phi_{G,p}[A])&\stackrel{ \ \eqref{eq:Russo} \ }=-\frac{\phi_{G,p}[{\bf 1}_{A}|\omega|]-\phi_{G,p}[A]\phi_{G,p}[|\omega|]}{p(1-p)(1-\phi_{G,p}[A])},\nonumber\\
    &\stackrel{\hphantom{\rm (FKG)}}=\frac{\phi_{G,p}[{\bf 1}_{A^c}|\omega|]-\phi_{G,p}[A^c]\phi_{G,p}[|\omega|]}{p(1-p)\phi_{G,p}[A^c]}\nonumber \\
    &\stackrel{\hphantom{\rm (FKG)}}=\frac{\phi_{G,p}[{\bf 1}_{A^c}(|\omega|-H_{A^c})]-\phi_{G,p}[A^c]\phi_{G,p}[|\omega|]}{p(1-p)\phi_{G,p}[A^c]}\nonumber\\
   &\stackrel{\rm (FKG)}\le \frac{\phi_{G,p}[A^c]\phi_{G,p}[|\omega|-H_{A^c}]-\phi_{G,p}[A^c]\phi_{G,p}[|\omega|]}{p(1-p)\phi_{G,p}[A^c]}\nonumber\\
   &\stackrel{\hphantom{\rm (FKG)}}\le -4\phi_{G,p}[H_{A^c}].
  \end{align}
Integrating \eqref{eq:hamming} between $p$ and $p+\delta$ and then taking the exponential, one obtain
$$\frac{1-\phi_{G,p+\delta}[A]}{1-\phi_{G,p}[A]}\le
\exp\Big(-4\int_p^{p+\delta}\phi_{G,s,q}^1[H_{A^c}] ds\Big)\le \exp\big(-4\delta\, \phi_{G,p}[H_{A^c}]\big).$$
In the second inequality we used that $\phi_{G,s}[H_{A^c}]$ is increasing in $s$.
We conclude the proof by using $\phi_{G,p}[A]\ge0$ and by letting $G$ tend to infinity.
\end{proof}
%Before starting, we need some notation. Define $u_i=i-\lfloor (\log i)^2\rfloor$. Let $i_0=i_0(\delta)\in\bbN$ satisfying the following conditions: for any $i\ge i_0$,
%\begin{align}\label{eq:i_0 second}
%&2^{i}~~\ge ~96,\\
%\label{eq:i_0 third}&2^{u_i}~\ge~ 16 \cdot 3^{\lfloor (\log i)^2\rfloor},\\
%\label{eq:i_0 first}&2^i~~\ge~ \exp\big[\frac{\delta}{(i+2)(i+3)}\cdot 2^{u_i+1}\big].
%\end{align}
In the of Proposition~\ref{prop:exponential decay2}, a key ingredient will be
the following lemma, which can be seen as a generalization of
\eqref{eq:combination}. For a rectangle $R=\llb -a,a\rrb\times\llb -b,b\rrb$,
let
\begin{equation*}
\sfN(a,b):=\max\{n\ge1:\exists n\text{ disjoint paths in $R$ from }\{-a\}\times\llb -b,b\rrb\text{ to }\{a\}\times\llb -b,b\rrb\}.
\end{equation*}
Note that $\sfN(a,b)$ is equal to $H_{A^c}$ where $A$ is the event that there
exists a path from left to right in $R$ (this follows from Menger's
mincut-maxflow theorem).

As in \eqref{eq:combination}, we have 
\begin{equation*}
  \label{eq:6}
  \phi_p[\sfN(\ell n,n)\ge1]\ge  \phi_p[\sfN(2n,n)\ge1]^{2(\ell-2)+1}
\end{equation*}
for every $p\in[0,1]$, $n\ge1$, $\ell\ge 2$. We prove below that this argument
also works for several disjoint paths, and we can replace the ``$1$''s in the
two events estimated above by an arbitrary number.
\begin{lemma}
  \label{sec:lemGluingManyPaths}
  Let $0\le p\le 1$, $n,u\ge 1$, $\ell\ge 2$. We have
\begin{equation*}
  \label{eq:7}
  \phi_p[\sfN(\ell n,n)\ge u]\ge  \phi_p[\sfN(2n,n)\ge u]^{2(\ell-2)+1}.
\end{equation*}
\end{lemma}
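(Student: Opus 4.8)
The plan is to mimic the gluing argument behind~\eqref{eq:combination}, but now tracking $u$ disjoint paths at once instead of a single crossing. First I would reduce to the case $\ell\ge 3$ being built up one unit at a time: it suffices to prove
$$\phi_p[\sfN((k+1)n,n)\ge u]\ge \phi_p[\sfN(kn,n)\ge u]\cdot \phi_p[\sfN(2n,n)\ge u]^2$$
for $k\ge 2$, and then iterate. (Here the exponent $2(\ell-2)+1$ comes out as $2$ per extra unit of length beyond the first block of length $2$.) The combinatorial heart is a deterministic statement: if there are $u$ disjoint left–right crossings of the long box $\llb -kn,kn\rrb\times\llb -n,n\rrb$, and simultaneously $u$ disjoint left–right crossings in each of two suitably placed boxes of aspect ratio $2\times 1$ that overlap the long box on full-height vertical strips, then one can splice them to get $u$ disjoint left–right crossings of the box of length $(k+1)n$. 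The two auxiliary $2n\times n$ boxes should be centered so that each shares with $\llb -kn,kn\rrb\times\llb -n,n\rrb$ a vertical strip of width $\ge n$ (one near the right end of the old long box, one straddling the new right extension), exactly as in the one-path case~\eqref{eq:combination}; then a crossing of an auxiliary box, restricted to such a strip, still connects its two vertical sides.

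The splicing itself is the step I expect to be the main obstacle, because with $u$ disjoint paths one cannot simply concatenate endpoints: the $u$ crossings of the long box meet the separating vertical line $\{x\}\times\llb -n,n\rrb$ in $\ge u$ points (a crossing path must intersect every vertical line it straddles), and similarly the $u$ crossings of the auxiliary box meet it in $\ge u$ points; I need to glue along this common vertical segment. The clean way to handle this is via the maxflow/mincut (Menger) description already invoked in the paper: $\sfN(a,b)\ge u$ is equivalent to saying that in the box $R$, viewed as a network with the left side as source and the right side as sink, every edge-cut separating the two sides has size $\ge u$. So instead of manipulating explicit paths I would argue on cuts: suppose for contradiction that the glued box of length $(k+1)n$ has a cut $S$ of size $<u$ separating its left and right sides. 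Intersecting $S$ with the long box gives a cut of that box together with... — more precisely, because the long box and each auxiliary box each share a full-height vertical strip with the union, any left–right cut of the big box induces, by restriction, a left–right cut of each of the three sub-boxes (using that a path crossing a sub-box that is \emph{not} cut by $S$ would, after truncation to the shared strip and concatenation through the overlaps, yield a left–right path of the big box avoiding $S$, a contradiction). Hence $|S|\ge$ the mincut of at least one sub-box in a way that forces $|S|\ge u$; a little care with how the three overlapping strips cover a separating line of the big box turns this into the bound $|S|\ge \min(u,u,u)=u$. This is the only genuinely new geometric input; everything else is the FKG inequality applied to the three increasing events $\{\sfN(\cdot)\ge u\}$ (monotone increasing in $\omega$), whose intersection is contained in $\{\sfN((k+1)n,n)\ge u\}$ by the deterministic gluing, giving
$$\phi_p[\sfN((k+1)n,n)\ge u]\ \ge\ \phi_p\big[\{\sfN(kn,n)\ge u\}\cap \{\sfN(2n,n)\ge u\}^{(1)}\cap\{\sfN(2n,n)\ge u\}^{(2)}\big]\ \ge\ \phi_p[\sfN(kn,n)\ge u]\,\phi_p[\sfN(2n,n)\ge u]^2,$$
where the superscripts denote the two translated copies, each of probability $\phi_p[\sfN(2n,n)\ge u]$ by translation invariance. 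Iterating from $k=2$ to $k=\ell-1$ and collecting exponents gives the claimed power $2(\ell-2)+1$.

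Two technical points I would address along the way. First, one must be slightly careful that disjoint paths in $R$ are edge-disjoint versus vertex-disjoint; since the paper defines $\sfN$ via "disjoint paths" and immediately identifies it with the Hamming distance $H_{A^c}$ via Menger, I would work with the edge-version, for which the maxflow–mincut identity is cleanest, and note that the gluing respects edge-disjointness because distinct sub-boxes only share edges on the overlap strips where I am concatenating, not crossing. Second, for the iteration to close I need $n$ large enough only insofar as $kn$-boxes make sense for integer endpoints, which is automatic; no largeness of $n$ is actually required, matching the statement. The whole argument is thus: Menger reformulation $\Rightarrow$ deterministic cut-based gluing lemma for one extra unit of length $\Rightarrow$ FKG $\Rightarrow$ induction on $\ell$.
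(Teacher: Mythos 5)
Your proposal follows essentially the same route as the paper: both arguments reduce the gluing of $u$ disjoint crossings to Menger's theorem --- any edge set of size $<u$ leaves each sub-box crossed, and the standard single-crossing gluing then still crosses the big box, so the mincut is at least $u$ --- followed by FKG and induction on $\ell$ (the paper treats $\ell=3$ with the two horizontal boxes $R_0,R_2$ and the transversal box $R_1$, then inducts). The one point to correct is your description of the two auxiliary boxes: they cannot both be crossed left--right, since two horizontal crossings of overlapping boxes of the same height need not intersect; as in \eqref{eq:combination} and in the paper's choice of $R_1$, one of them must be a rotated copy crossed in the transversal (vertical) direction through the overlap strip, so that the single-path gluing underlying your cut argument actually produces a crossing of the long box.
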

\begin{proof}
  We only prove it for $\ell=3$. The more general statement above follows by
  induction. Consider the rectangles $R_j=\llbracket(-3+j)
  n,(1+j)n\rrbracket\times \llbracket-n,n \rrbracket$, $j=0,2$, and $R_1=\llbracket-n,n \rrbracket \times \llbracket-n,3n\rrbracket$. Let $E$ be
  the event that both $R_0$ and $R_2$ are crossed horizontally by $u$ disjoint
  paths, and $R_1$ is crossed vertically by $u$ disjoint paths. By the FKG
  inequality, we have
  \begin{equation*}
    \label{eq:12}
    \phi_p[E] \ge \phi_p[\sfN(2n,n)\ge u]^3. 
  \end{equation*}
  Now, observe that on the event $E$, there must exist at least $u$ disjoint
  paths crossing horizontally the rectangle $R=\llbracket -3 n,3 n \rrbracket\times \llbracket -n,n \rrbracket$. Indeed,
  if we close less than $u$ edges in $R$ then $R_0$ and $R_2$ remains crossed
  horizontally and $R_1$ remains crossed vertically, and therefore $R$ is also
  crossed horizontally. Therefore, by Menger's Mincut-Maxflow theorem, there
  must exist at least $u$ disjoint paths crossing horizontally the rectangle
  $R$. Thus we obtain
  \begin{equation*}
    \label{eq:13}
    \phi_p[\sfN(3n,n)\ge u]\ge  \phi_p[E]\ge \phi_p[p[\sfN(2n,n)\ge u]^3.
  \end{equation*}
\end{proof}

We are ready to proceed with the proof of proposition~\ref{prop:exponential
  decay2}. We assume that 
\begin{equation}
  \label{eq:14}
  \limsup_{n\rightarrow \infty} \> \phi_{p}\left[ \calC_h(2n,n) \right] =1,
\end{equation}
holds, and fix $\delta>0$. 

We choose $i_0\ge1$ large enough such that for every $i\ge i_0$,
\begin{equation}
  \label{eq:C1}
  1-\exp(-\tfrac{\delta2^{i-9}}{(i+1)^4})\ge \frac12.
\end{equation}
Then, by \eqref{eq:14}, we can pick $n_0\ge 1$ such that
\begin{equation}
  \label{eq:16}
  \phi_p[\calC_h(2n_0,n_0)]^{4\cdot 2^{2i_0}}\ge\frac12.
\end{equation}

The proof of Proposition~\ref{prop:exponential decay2} is based on the following Lemma.
%\medbreak
%Before continuing the proof, we define the event $\calC_v^{*}(2n,n)$ to be the existence of a dual crossing from $[0,2n]\times \{0\}$ to $[0,2n]\times \{n\}$ inside the rectangle $[0,2n]\times [0,n]$. Clearly, $\calC_v^{*}(2n,n) =  \calC_h^c(2n,n)$, where $\calC_h^c(2n,n)$ is the complement of the event of existence of a horizontal crossing in the box. Based on the definition of the Hamming distance, for a configuration $\omega$, $H_{\calC_h^c ( 2n, n)} (\omega)$ is the number of disjoint horizontal crossing inside the box from left to the right, or equivalently the minimal number of edges that need to be turned on in the dual configuration in order to have a vertical crossing.
\begin{lemma}\label{lem:1a} For $i\ge0$ set $p_i=p+\delta\sum_{1\le j\le i}
  \tfrac1{i^2}$, $c_i=1-\tfrac14\sum_{1\le j\le i}\tfrac1{i^2}$ and $K_i=2^{i} n_0$. For every
  $i\ge i_0$, we have 
\begin{equation}
  \label{eq:8}
  \phi_{p_i}[\sfN(2K_{i},K_i)\ge c_i2^i]\ge 1/2.
\end{equation}
\end{lemma}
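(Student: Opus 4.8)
The plan is to establish \eqref{eq:8} by induction on $i\ge i_0$: the base case is a direct consequence of \eqref{eq:16}, and the inductive step combines a gluing construction carried out at parameter $p_i$ (which simultaneously doubles the scale and the number of disjoint crossings, at the price of a universal multiplicative constant in the probability) with the Hamming-distance inequality of Lemma~\ref{lem:hamming1} (which converts the increment $p_{i+1}-p_i$ into a jump of the probability back up to $\tfrac12$, at the price of lowering the number of crossings from $2c_i2^i$ to $c_{i+1}2^{i+1}$).

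\textbf{Base case $i=i_0$.} Since $c_{i_0}2^{i_0}\le 2^{i_0}$ and $K_{i_0}=2^{i_0}n_0$, partition $\llbracket -2K_{i_0},2K_{i_0}\rrbracket\times\llbracket -K_{i_0},K_{i_0}\rrbracket$ into $\lceil c_{i_0}2^{i_0}\rceil$ disjoint horizontal sub-rectangles, each of height $2n_0$ and width $4K_{i_0}$. By \eqref{eq:combination} (with $\ell=2^{i_0+1}$, $k=1$, $m=n_0$) and translation invariance, each such sub-rectangle is crossed from left to right by an open path with probability at least $\phi_p[\calC_h(2n_0,n_0)]^{2^{i_0+2}}$, and crossings of distinct (disjoint) sub-rectangles are disjoint; hence FKG together with \eqref{eq:16} gives
\begin{equation*}
\phi_p\big[\sfN(2K_{i_0},K_{i_0})\ge c_{i_0}2^{i_0}\big]\ \ge\ \phi_p[\calC_h(2n_0,n_0)]^{4\cdot 2^{2i_0}}\ \ge\ \tfrac12 .
\end{equation*}
As this event is increasing and $p_{i_0}\ge p$, the same bound holds under $\phi_{p_{i_0}}$, which is \eqref{eq:8} at level $i_0$.

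\textbf{Inductive step, I: gluing at $p_i$.} Assume \eqref{eq:8} at level $i$; write $u:=c_i2^i$ and $B$ for the rectangle defining $\sfN(2K_i,K_i)$, so that $\phi_{p_i}[\sfN_B\ge u]\ge\tfrac12$. I claim that $\phi_{p_i}[\sfN(2K_{i+1},K_{i+1})\ge 2u]\ge\kappa$ for some universal $\kappa>0$. Since $K_{i+1}=2K_i$, the rectangle $R$ defining $\sfN(2K_{i+1},K_{i+1})$ is twice as long and twice as tall as $B$. Inside $R$ one fits two disjoint vertically stacked translates of $B$, together with a bounded number of further translates of $B$ and of its image under the rotation by $\pi/2$ (the latter used as vertical ``bridges''), and lets $E$ be the FKG-intersection over all of these of the events ``the relevant copy is crossed by $u$ disjoint paths''. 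On $E$: the two stacked copies provide $2u$ disjoint horizontal crossings, which are prolonged to the full length of $R$ by gluing horizontally through the bridges exactly as in the proof of Lemma~\ref{sec:lemGluingManyPaths}; more precisely, closing fewer than $2u$ edges of $R$ still leaves every constituent copy crossed (each has $u$ disjoint crossings, hence at least one survives), and by planarity these recombine into a left--right crossing of $R$, so $\sfN_R\ge 2u$ by Menger's mincut-maxflow theorem. Finally $\phi_{p_i}[E]\ge\kappa$ by FKG: each brick event has probability $\ge\tfrac12$ by the induction hypothesis and translation/rotation invariance, and each bridge event does too, using that a horizontal crossing of a rectangle restricts to a horizontal crossing of any narrower rectangle of the same height, so that $\{\sfN_B\ge u\}$ dominates the relevant square-crossing events.

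\textbf{Inductive step, II: paying the increment.} Apply Lemma~\ref{lem:hamming1} to the increasing event $A:=\{\sfN(2K_{i+1},K_{i+1})\ge c_{i+1}2^{i+1}\}$, which depends on finitely many edges, with parameter $p_i$ and increment $p_{i+1}-p_i>0$. Toggling a single edge changes $\sfN(2K_{i+1},K_{i+1})$ by at most $1$, hence $H_{A^c}\ge\big(\sfN(2K_{i+1},K_{i+1})-c_{i+1}2^{i+1}\big)^+$, which on the event of step I is at least $2u-c_{i+1}2^{i+1}=(c_i-c_{i+1})2^{i+1}$; since $c_i-c_{i+1}$ is of order $1/i^2$ by the definition of $c_i$, step I yields $\phi_{p_i}[H_{A^c}]\ge\kappa\cdot\Theta(2^i/i^2)$. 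As $p_{i+1}-p_i$ is of order $\delta/i^2$, Lemma~\ref{lem:hamming1} gives
\begin{equation*}
\phi_{p_{i+1}}\big[\sfN(2K_{i+1},K_{i+1})\ge c_{i+1}2^{i+1}\big]\ \ge\ 1-\exp\!\Big(-\Theta\big(\delta\, 2^i/i^4\big)\Big)\ \ge\ \tfrac12
\end{equation*}
for every $i\ge i_0$, by the very choice \eqref{eq:C1} of $i_0$. This is \eqref{eq:8} at level $i+1$, completing the induction. The only genuinely delicate point is step I — arranging the bricks and bridges so that their crossings provably recombine into $2u$ disjoint crossings of $R$, and so that every bridge event stays bounded below uniformly in $i$ — which is modelled on the proof of Lemma~\ref{sec:lemGluingManyPaths}; the rest is bookkeeping of the constants ($\kappa$, the $\tfrac14$ in the definition of $c_i$, and the increments $p_{i+1}-p_i$) so as to match the numerology already fixed in \eqref{eq:C1} and \eqref{eq:16}.
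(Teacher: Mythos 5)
Your overall route is the same as the paper's: induction on $i$, a base case obtained from \eqref{eq:combination} plus vertical stacking so as to match the exponent in \eqref{eq:16}, a gluing step at parameter $p_i$ that doubles both the scale and the number of disjoint crossings at the cost of a universal multiplicative constant (the paper's $2^{-10}$, your $\kappa$), and finally Lemma~\ref{lem:hamming1}, which converts the surplus $(c_i-c_{i+1})2^{i+1}$ of crossings into a lower bound on $\phi_{p_i}[H_{A^c}]$ and hence a jump back up to probability $\tfrac12$ at $p_{i+1}$, exactly as calibrated by \eqref{eq:C1}. Your base case and your step II are the paper's argument line by line.

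The one step whose written justification would fail is the Menger argument in your step I. You assert that closing fewer than $2u$ edges of $R$ ``still leaves every constituent copy crossed (each has $u$ disjoint crossings, hence at least one survives)''; but $u<2u$, so the adversary may spend $u$ of its allowed $2u-1$ closures inside a single brick and sever it completely. A single global application of Menger's theorem to the union of bricks and bridges therefore does not produce $2u$ disjoint crossings of $R$. The repair --- which is what the paper does implicitly when it invokes Lemma~\ref{sec:lemGluingManyPaths} to pass from \eqref{eq:8} to $\phi_{p_i}[\sfN(2K_{i+1},K_{i+1})\ge c_i2^{i+1}]\ge 2^{-10}$ --- is to split $R=\llbracket -4K_i,4K_i\rrbracket\times\llbracket -2K_i,2K_i\rrbracket$ into its two disjoint horizontal strips, each a translate of the rectangle of $\sfN(4K_i,K_i)$, and to apply Lemma~\ref{sec:lemGluingManyPaths} (with $\ell=4$, $n=K_i$) \emph{separately inside each strip}: each strip then carries $u$ disjoint left--right crossings of $R$ with probability at least $\phi_{p_i}[\sfN(2K_i,K_i)\ge u]^{5}\ge 2^{-5}$, the two families of crossings are disjoint because the strips are, and FKG over the two strips gives $\kappa=2^{-10}$. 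With that local fix your proof coincides with the paper's.
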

Before proving this lemma, let us explain how we finish the proof. First, observe that $p_i\le p+2\delta$ and $c_i\ge 1/2$ for any
$i\ge0$. 
Therefore, by monotonicity, \eqref{eq:8} 
implies for every $i\ge i_0$,
$$\phi_{p+2\delta}[\sfN(2K_{i},K_i)]\ge  2^{i-2}.$$ 
Since  $\sfN(2K_i,K_i)(\omega)$ is equal to  $H_{\calC_h^c(2K_{i},K_i)}$,  
Lemma~\ref{lem:hamming1} implies that for $i \ge i_0$,
\begin{equation*}%\label{eq:za}
  \phi_{p+3\delta}[\calC_h \left( 2K_{i}, K_i \right) ]\ge 1-e^{-\delta  2^i}.\end{equation*}
%It implies for some constant $c>0$ small enough, for every $i\ge 0$,
%\begin{equation*}%\label{eq:za}
%  \phi_{p+3\delta}[\calC_h \left( 2K_{i}, K_i \right) ]\ge 1-e^{-c  2^i}.\end{equation*}    
Let $n\ge 2^{i_0}n_0$ and choose $i$ such that $2^{i} n_0 \leq n <2^{i+1} n_0$. We have
\begin{equation*}%\label{eq:za}
\phi_{p+3\delta}  [\calC_h \left( 2 n, n \right) ]\ge \phi_{p+3\delta}[\calC_h \left( 4 K_i, K_i \right) ]\stackrel{\eqref{eq:combination}}\ge (1-e^{- \delta 2^{i}})^{5}\ge 1-e^{-c'n}\end{equation*}
for some constant $c'>0$ small enough. This implies for some constant $c>0$, for every $n \ge 1$
\begin{equation*}
\phi_{p+3\delta}  [\calC_h \left( 2 n, n \right) ]\ge 1-e^{-cn}.\end{equation*}
Since $\delta$ is arbitrary, Proposition~\ref{prop:exponential decay2} follows readily. We now can concentrate on the proof of Lemma~\ref{lem:1a}.

\begin{proof}[Proof of Lemma~\ref{lem:1a}] We prove the result by induction on
  $i\ge i_0$.   For $i = i_0$, %Consider the rectangles $\hat R_0=\llb -2K_i,2K_i\rrb\times\llb -K_0,K_0\rrb$. 
\eqref{eq:combination} implies that
$$\phi_p[\calC_h(2 K_{i_0},n_0)]\ge \phi_p[\calC_h(2n_0,n_0)]^{4\cdot 2^{i_0}}.$$

Then, by considering the translates of $\calC_h(2K_{i_0},n_0)$ by the vector
$(0,(2j-1) n_0)$ with $-2^{i_0-1}< j  \leq 2^{i_0-1}$, we deduce that 
\begin{equation*}\phi_{p}[\sfN(2K_{i_0},K{i_0})\ge 2^{i_0}] \stackrel{\rm (FKG)} \ge
  \phi_p[\calC_h(2n_0,n_0)]^{4\cdot 2^{2i_0}} \stackrel{\eqref{eq:16}}\ge \frac12.
\end{equation*} 

Let us move to the induction step. Let $i\ge i_0$ such  that \eqref{eq:8} holds.
First, by Lemma~\ref{sec:lemGluingManyPaths}, we have 

\begin{equation*}
  \label{eq:15}
  \phi_{p_i}[\sfN(2K_{i+1},K_{i+1})\ge c_i 2^{i+1}]\ge \frac1{2^{10}}.
\end{equation*}
Note that  $ \left( \sfN(2K_{i+1},K_{i+1})-c_{i+1}2^{i+1} \right) \vee 0$ is exactly equal to
$H_{\sfN(2K_{i+1},K_{i+1})<{c_{i+1}2^i}}$. Hence, the above equation implies
  \begin{equation*}
    \label{eq:17}
    \phi_{p_i}[H_{\sfN(2K_{i+1},K_{i+1})<{c_{i+1}2^{i+1}}}]\ge \frac1{2^{10}} (c_i-c_{i+1})2^{i+1}=\frac1{2^{11}{(i+1)}^2} 2^i.
  \end{equation*}
  Lemma~\ref{lem:hamming1} applied to $A=\{\sfN(2K_{i+1},K_{i+1})\ge c_{i+1}2^{i+1}\}$  implies that
\begin{equation*}
\phi_{p_{i+1}}[\sfN(2K_{i+1},K_{i+1}) \ge c_{i+1}2^{i+1}]\ge
1-\exp\big(-\tfrac{(p_{i+1}-p_i)2^{i-9}}{(i+1)^2}\big)=1-\exp\big(-\tfrac{\delta2^{i-9}}{(i+1)^4}\big) \stackrel{\eqref{eq:C1}}\ge \frac12.
\end{equation*}
%By \eqref{eq:C1}, this finally gives 
 %\begin{equation*}
%\phi_{p_{i+1}}[\sfN(2K_{i+1},K_{i+1}) \ge c_{i+1}2^{i+1}]\ge \frac12,
%\end{equation*}
which concludes the proof.
\end{proof}

\bigbreak

\noindent\textbf{Remarks and comments.}

\medbreak\noindent {\bf 1.} The argument is inspired by a similar yet less powerful argument introduced in \cite{duminil2014phase}.
\medbreak\noindent
{\bf 2.} In \cite{BefDum12}, exponential decay was proved in two steps. First, the cluster of the origin was proved to have finite moments of any order. Then Theorem~5.60 of \cite{Gri06} implied the proof. Note that Theorem~5.60 uses the Domain Markov property and is based on a non-trivial theorem of Kesten \cite{GanKes94}. The argument presented here avoids the use of the domain Markov property and is self-contained.  
\medbreak\noindent
{\bf 3.} Equation \eqref{eq:hamming} is usually stated (see \cite[Theorem~2.56]{Gri06} or \cite{GriPiz97}) in terms of the number $H_A$ of edges that must be switched to open in $\omega$ to be in $A$ (this is the Hamming distance to $A$ in $\{0,1\}^{E_G}$), and reads  
  \begin{align}\label{eq:hamming0}
    \frac{d}{dp}\log(\phi_{G,p}[A])\ge\frac{\phi_{G,p}[H_A]}{p(1-p)},
  \end{align}
which, when integrated between $p-\delta$ and $p$, gives
$$\phi_{G,p-\delta}[A]\le \exp\big(-4\delta\phi_{G,p}[H_A]\big).$$
This inequality is useful to prove that a probability is close to 0, while \eqref{eq:hamming1} is useful to prove that the probability is close to 1.

\section{Proof of Theorem~\ref{main:corollary}}
\label{sec:proof-coroll-refm}
Consider the dual lattice $(\bbZ^2)^*=(\tfrac12,\tfrac12)+\bbZ^2$. Every edge $e$ of $\bbZ^2$ crosses exactly one edge of $(\bbZ^2)^*$ in its middle. We denote this edge by $e^*$. Now, let $\omega^*$ be the configuration on $(\bbZ^2)^*$ defined by $\omega^*_{e^*}=1-\omega_e$.

Consider the measure $\phi^0_{p}$ to be the random-cluster measure with edge-weights $p$ and $q$, and free boundary conditions on $\bbZ^2$. The only properties of $\phi^0_{p}$ that we will use are the following (we refer to \cite{Gri06} for a definition of the free boundary conditions and the two following properties):
\begin{description}
\item[Stochastic domination] For any increasing event $A$, $\phi_{p}^0[A]\le \phi_{p}^1[A]$.
\item[Duality] If $\omega$ is sampled according to $\phi^1_{p}$, then $\omega^*$ is sampled according to the measure $\phi_{p^*}^0$ translated by the vector $(\tfrac12,\tfrac12)$, where
$$\frac{pp^*}{(1-p)(1-p^*)}=q.$$
%$$\frac{pp^*}{(1-p)(1-p^*)}=q\quad\text{ and }\quad q^*=q.$$
\end{description}
Let $\calE$ be the event that there is an open path in $\omega$ from left to right in $\llb 0,2n+1\rrb\times\llb 0,2n\rrb$. Also introduce $\calE^*$ be the event that there is an open path in $\omega^*$ from top to bottom in $\llb \tfrac12,2n+\tfrac12\rrb\times\llb -\tfrac12,2n+\tfrac12\rrb$.
Since either $\calE$ or $\calE^*$ occur, we deduce that
\begin{align*}1&=\phi_{p}^1\Big[\calE\Big]+\phi^1_{p}\Big[\calE^*\Big]\stackrel{\text{Duality}}=\phi_{p}^1\Big[\calE\Big]+\phi_{p^*}^0\Big[\calE\Big].\end{align*}
For $p=\sqrt q/(1+\sqrt q)$, $p^*=p$ and therefore the stochastic domination implies that 
$$\phi_{\sqrt q/(1+\sqrt q)}^1\Big[\calC_h(n,n) \Big]\ge \phi_{\sqrt q/(1+\sqrt q)}^1\Big[\calE\Big]\ge \tfrac12.$$
Theorem~\ref{thm:main} implies that for any $p>\sqrt q/(1+\sqrt q)$, there exists $c>0$ such that for every $n\ge1$,
\begin{equation}
\label{eq:abbb}
\phi_{p}^1[\calC_h(2n,n)]\ge 1-e^{-cn}.
\end{equation}
For every $k$, define $\calA_k$ to be $\calC_h(2^{k+1},2^k)$ if $k$ is even, and its image by the rotation of angle $\pi/2$ around the origin if $k$ is odd. If all the $\calA_\ell$ occur simultaneously for every $\ell$ such that $2^\ell\ge n/2$, then $\llb -n,n\rrb^2$ is connected to infinity. Thus, there exists $c'>0$ such that for every $n\ge1$,
$$\phi_{p}^1\Big[\llb-n,n\rrb^2\longleftrightarrow \infty\Big]\stackrel{\rm (FKG)}\ge \prod_{\ell\in\bbN:2^\ell\ge n/2}\phi_{p}^1[\calA_\ell]\stackrel{\eqref{eq:abbb}}\ge  \prod_{\ell\in\bbN:2^\ell\ge n/2} (1-e^{-c2^\ell})\ge 1-e^{-c'n}.$$
This implies that $p_c\le \sqrt q/(1+\sqrt q)$.

Now, fix $p<\sqrt q/(1+\sqrt q)$. Let $x$ with $\|x\|=n$ and let $A_x$ be the event that $x$ is connected to $0$ in $\llb -n,n\rrb^2$. 
We wish to bound the probability of $A_x$. Without loss of generality, we assume that the first coordinate of $x$ equals $n$. Then, define the event $B=\tau_{(-n,0)} A_x\cap A_x$ and $\tilde B$ its symmetric with respect to the $y$ axis. We have that 
$$\phi_{p}^0[B\cap \tilde B]\stackrel{\rm (FKG)}\ge \phi_{p}^0[B]^2\stackrel{\rm (FKG)}\ge \phi_{p}^0[A_x]^4.$$
Now, let $C_1=\tau_{(0,2n)}B\cap\tilde B$ and $C_2$, $C_3$ and $C_4$ the images by the rotations of angles $\pi/2$, $\pi$ and $3\pi/2$ around the origin. We deduce that 
$$\phi_{p}^0[C_1\cap C_2\cap C_3\cap C_4]\stackrel{\rm (FKG)}\ge\phi_{p}^0[B\cap \tilde B]^4\ge \phi_{p}^0[A_x]^{16}.$$
Yet, on $C_1\cap C_2\cap C_3\cap C_4$, we have that $\llb -n,n\rrb^2$ is not connected to infinity in $\omega^*$. Using duality (note that $p^*\ge \sqrt q/(1+\sqrt q)$), the previous inequality implies that
\begin{equation}\label{eq:abcc}\phi_{p}^0[A_x]^{\,16}\le 1-\phi^1_{p^*}[\llb-n,n\rrb^2\leftrightarrow \infty]\stackrel{\eqref{eq:abbb}}\le e^{-cn},\end{equation}
for a constant $c>0$ depending on $p$ and $q$ only. 

Now, if $0$ is connected to $x$, then $0$ must be connected to a vertex on the boundary of the box of size $\|x\|$ {\em inside} the box itself. Hence, \eqref{eq:abcc} implies that for any $x$,
$$\phi_{p}^0[0\leftrightarrow x]\le 8\|x\|e^{-c\|x\|/16}.$$
The previous inequality implies \eqref{eq:abcd} for every $p<\sqrt q/(1+\sqrt q)$ satisfying $\phi_{p}^1=\phi_{p}^0$. This can be extended to every $p<\sqrt q/(1+\sqrt q)$ using the fact that the set of values of $p$ for which $\phi_{p}^1\ne\phi_{p}^0$ is at most countable (see \cite[Theorem~4.58]{Gri06}). Thus \eqref{eq:abcd} holds for every $p<\sqrt q/(1+\sqrt q)$. In particular it shows that $p_c \geq \sqrt q/(1+\sqrt q)$.

\bigbreak

\noindent\textbf{Remarks and comments.}

\medbreak\noindent
{\bf 1.} The inequality $p_c(q)\ge \sqrt q/(1+\sqrt q)$ was already proved using an argument due to Zhang in \cite{Gri06}. However, we did not use this inequality in the present argument.

\paragraph{Acknowledgments} The authors are thankful to Ioan Manolescu for carefully reading the manuscript and his helpful comments. 
This research was supported by the NCCR SwissMAP, the ERC AG COMPASP, and the Swiss NSF.

\bibliographystyle{alpha}
\bibliography{biblicomplete}
\small\begin{flushright}
\textsc{D\'epartement de Math\'ematiques}
  \textsc{Universit\'e de Gen\`eve}
  \textsc{Gen\`eve, Switzerland}
  \textsc{E-mail:} \texttt{hugo.duminil@unige.ch}, \texttt{aran.raoufi@unige.ch}, \texttt{vincent.tassion@unige.ch}
\end{flushright}
\end{document}